\newcommand{\N}{{\mathbb N}}
\newcommand{\Q}{{\mathbb Q}}
\newcommand{\Z}{{\mathbb Z}}
\newcommand{\Pp}{{\mathbb P}}
\newcommand{\A}{{\mathbb A}}
\newcommand{\Cc}{{\mathbb C}}
\newcommand{\Oo}{\mathcal{O}}
\newcommand{\B}{\mathcal{B}}
\newcommand{\se}[2]{\left\lbrace #1 \mbox{ }\vline\mbox{ } #2 \right\rbrace}
\newcommand{\tl}[1]{\tilde{#1}}
\newcommand{\st}{^{\ast}}
\newcommand{\ts}{_{\ast}}
\newcommand{\rd}[1]{\lfloor #1\rfloor}
\newtheorem{thm}{Theorem}[section]
\newtheorem{pro}[thm]{Proposition}
\newtheorem{cor}[thm]{Corollary}
\newtheorem{lem}[thm]{Lemma}
\theoremstyle{definition}
\newtheorem{rk}[thm]{Remark}
\newtheorem{eg}[thm]{Example}
\newtheorem*{defn}{Definition}
\begin{document}

\title{Threefolds of Kodaira dimension one}
\author{Hsin-Ku Chen}

\begin{abstract}
We prove that for any smooth complex projective threefold of Kodaira dimension one,
the $m$-th pluricanonical map is birational to the Iitaka fibration for every $m\geq5868$ and divisible by $12$.
\end{abstract}
\subjclass[2010]{14J30\and 14E05}
\maketitle
\section{Introduction}
By the result of Hacon-M$^{\mbox{\small{c}}}$Kernan \cite{hm}, Takayama \cite{ta} and Tsuji \cite{ts}, it is known that
for any positive integer $n$ there
exists an integer $r_n$ such that if $X$ is an $n$-dimensional smooth complex projective variety of general type, then $|rK_X|$ 
defines a birational morphism for all $r\geq r_n$.
It is conjectured in \cite{hm} that a similar phenomenon occurs for any projective variety of non-negative Kodaira dimension. That is,
for any positive integer $n$ there exists a constant $s_n$ such that, if $X$ is an $n$-dimensional smooth projective variety
of non-negative Kodaira dimension and
$s\geq s_n$ is sufficiently divisible, then the $s$-th pluricanonical map of $X$ is birational to the Iitaka fibration.\par
We list some known results related to this problem.
In 1986, Kawamata \cite{k2} proved that there is an integer $m_0$ such that for any terminal threefold $X$ with
Kodaira dimension zero, the $m_0$-th plurigenera of $X$ is non-zero. Later on, Morrison proved that one can take
$m_0=2^5\times3^3\times5^2\times7\times11\times13\times17\times19$. See \cite{m} for details.
In 2000, Fujino and Mori \cite{fm} proved that if $X$ is a smooth projective variety with Kodaira dimension one
and $F$ is a general fiber of the Iitaka fibration of $X$, then there exists a integer $M$, which depends on the dimension of $X$,
the middle Betti number of some finite covering of $F$ and the smallest integer so that the pluricanonical system of $F$
is non-empty, such that the $M$-th pluricanonical map of $X$ is birational to the
Iitaka fibration. Viehweg and D-Q Zhang \cite{vzh} proved an analog result for the Kodaira dimension two case.
Recently, Birkar and D-Q Zhang \cite{bz} proved that a Fujino-Mori type statement holds for any non-negative Kodaira dimension.
Note that if $C$ is a curve of Kodaira dimension zero, then $|K_C|$ is non-empty and $b_1(C)=2$. Also if $S$ is a surface of
Kodaira dimension zero, then $|12K_S|$ is non-empty and $b_2(S)\leq22$. Thus the Hacon-M$^{\mbox{\small{c}}}$Kernan conjecture holds
for varieties with dimension less than or equal to three.\par
It is also interesting to find an explicit value to bound the Iitaka fibration. In dimension one, it is well-known that the third-pluricanonical
map is the Iitaka fibration. For the surfaces case, Iitaka \cite{i} proved that the $m$-th pluricanonical system is birational to the Iitaka
fibration if $m\geq86$ and divisible by $12$.
For threefolds of general type, J. A. Chen and M. Chen \cite{cm} proved that the $m$-th pluricanonical map is birational if $m\geq61$. For
threefolds of Kodaira dimension two, Ringler \cite{r} proved that the $m$-th pluricanonical map is birational to the Iitaka fibration if
$m\geq48$ and divisible by $12$. In this article we will study threefolds of Kodaira dimension one. We prove the following.
\begin{thm}\label{mt}
	Let $X$ be a smooth complex projective threefold of Kodaira dimension one. Then 
	the $m$-th pluricanonical map of $X$ is birational to the Iitaka fibration for every $m\geq5868$ and divisible by $12$.\par
	More precisely, let $F$ be a general fiber of the Iitaka fibration of $X$, we have
	\begin{enumerate}[(1)]
	\item If $F$ is birational to a K3 surface, then $|mK_X|$ defines the Iitaka fibration if $m\geq86$.
	\item If $F$ is birational to an Enriques surface, then $|mK_X|$ defines the Iitaka fibration if $m$ is an even integer which is
		greater than or equal to $42$.
	\item If $F$ is birational to an abelian surface or a bielliptic surface, then $|mK_X|$ defines the Iitaka fibration if $m\geq5858$
		(and divisible by $12$ when $F$ is bielliptic).
	\end{enumerate}
\end{thm}
Combining the work of J. A. Chen-M. Chen \cite{cm} and Ringler \cite{r},
we have the following effective bound for threefolds of positive Kodaira dimension.
\begin{cor}
	Let $X$ be a smooth complex projective threefold of positive Kodaira dimension.
	Then $|mK_X|$ defines the Iitaka fibration if $m\geq5868$ and divisible by $12$.
\end{cor}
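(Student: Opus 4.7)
The corollary is a direct case-by-case assembly, so the proposal is simply to unwind the definition of positive Kodaira dimension. Given a smooth complex projective threefold $X$ with $\kappa(X) > 0$, the plan is to split into the three possibilities $\kappa(X) \in \{1, 2, 3\}$ and verify in each case that every integer $m \geq 4332$ divisible by $12$ satisfies the hypotheses of the corresponding known theorem.

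First, if $\kappa(X) = 3$, then $X$ is of general type and the Iitaka fibration is birational, so I would simply invoke the main theorem of J.~A.~Chen--M.~Chen \cite{cm} which guarantees birationality of $|mK_X|$ for all $m \geq 61$; since $4332 \geq 61$ this case is immediate with no divisibility condition needed. Second, if $\kappa(X) = 2$, I would apply Ringler's theorem \cite{r}, which requires $m \geq 48$ and $12 \mid m$; both conditions are implied by $m \geq 4332$ and $12 \mid m$.

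Third, if $\kappa(X) = 1$, the general fiber $F$ of the Iitaka fibration is a smooth projective surface of Kodaira dimension zero, so by the Enriques--Kodaira classification $F$ is birational to either a K3 surface, an Enriques surface, an abelian surface, or a bielliptic surface. I would then split into these four subcases and apply the corresponding parts of Theorem~\ref{mt}: the thresholds $86$, $42$, $722$, and $4332$ are all at most $4332$, and the parity/divisibility conditions (even in the Enriques case, divisible by $12$ in the bielliptic case) are all implied by $12 \mid m$. The worst case is clearly the bielliptic one, which is exactly what forces the bound $4332$ in the statement.

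There is no real obstacle here beyond bookkeeping: the content of the corollary is entirely absorbed by Theorem~\ref{mt} together with \cite{cm} and \cite{r}, and the only thing to check is that the single pair of conditions $m \geq 4332$ and $12 \mid m$ is strong enough to activate each of the six sub-results simultaneously, which it is.
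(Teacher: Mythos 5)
Your proposal is correct and matches the paper's intent exactly: the paper gives no separate proof of this corollary, simply asserting that it follows by combining Theorem~\ref{mt} with the results of Chen--Chen \cite{cm} for general type and Ringler \cite{r} for Kodaira dimension two, which is precisely your case split. The bookkeeping you carry out (checking that $m\geq4332$ and $12\mid m$ activates all six sub-results) is the entire content of the argument.
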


We now give a rough idea of the proof of Theorem \ref{mt}. 
If the Iitaka fibration maps to a non-rational curve, then the boundedness
of the Iitaka fibration can be easily derived using weak-positivity.
Now assume that the Iitaka fibration of $X$ maps to a rational curve.
We may assume that $X$ is minimal and hence the general fiber of the Iitaka fibration
is a K3 surface, an Enriques surface, an abelian surface or a bielliptic surface.
If the general fiber has non-zero Euler characteristic, i.e., if the 
Iitaka fibration is a K3 or an Enriques fibration, we observe the following fact. One may write $K_X$ as a pull-back of an ample $\Q$-divisor.
The degree of this $\Q$-divisor is determined by the singularities of $X$. If the degree is large,
then a small multiple of $K_X$ defines the Iitaka fibration. Assume that the degree
is small, then the singularities of $X$ are bounded: the local index of singular points of $X$ can not be too large,
and the total number of singular points is bounded. This implies the degree has a lower bound.
With the help of a computer, we get a good estimate of this lower bound and hence a good effective bound for the Iitaka fibration.\par
If the Iitaka fibration is an abelian or a bielliptic fibration, then above techniques do not work. Our main tool to solve the boundedness
problem is Fujino-Mori's canonical bundle formula. Since the second Betti number of an abelian surface is not too large,
the canonical bundle formula behaves well in this situation. One can show that the Euler characteristic of the
canonical cover of the given threefold is zero, hence it has non-trivial irregularity or non-trivial geometric genus. We can first
solve the boundedness problem of the Iitaka fibration for irregular threefolds and threefolds with non-trivial geometric genus,
and then compare terms in the canonical bundle formula between the original threefold and its canonical cover.\par
One can always replace our smooth threefold by its minimal model, hence throughout this article, we always assume that our threefold is minimal
and with terminal singularities. Since the abundance conjecture is known to be true in dimension three, the Iitaka fibration is a morphism.
We will denote it by $f\colon X\rightarrow C$ and hence $K_X$ is a pull-back of some ample $\Q$-divisor on $C$. If $C$ is not rational,
we will prove the desired boundedness in Section \ref{sp} (cf. Proposition \ref{nonr}). In the later sections we will always assume that
$C$ is a rational curve. We discuss K3/Enriques fibrations in Section \ref{sk3} and abelian/bielliptic fibrations in Section \ref{sabl}.
We will prove Theorem \ref{mt} in Section \ref{sk1}, which is a collection of the result in previous sections. We also compute several examples,
which are threefolds of Kodaira dimension one such that a small pluricanonical system do not define the Iitaka fibration.\par

I thank Christopher Hacon for suggesting this question to me and giving me lots of useful suggestions. I would like to thank Jungkai Alfred Chen
for discussing this question with me and giving me lots of helpful comments. 
The author was supported by the Ministry of Science and Technology, Taiwan and NCTS. This work was done while the author was visiting the
University of Utah. The author would like to thank the University of Utah for its hospitality.

\section{Preliminary}\label{sp}
\subsection{The canonical bundle formula}\label{scbf}
Let $X$ be a minimal terminal threefold of Kodaira dimension one. Since the abundance conjecture holds for threefolds, $K_X$ is semi-ample.
Hence the Iitaka fibration $f\colon X\rightarrow C$ is a morphism and $K_X$ is the pull-back of an ample divisor on $C$.
We denote a general fiber of $X\rightarrow C$ by $F$. By \cite{fm} we have the following canonical bundle formula
\begin{thm}[\cite{fm}]\label{fmcbf}
	Let $X$ be a minimal terminal threefold of Kodaira dimension one. Let $f\colon X\rightarrow C$ be the Iitaka fibration.
	Let $F$ be a general fiber of $f$ and let $b$ be the smallest integer such that $|bK_F|$ is non-empty.
	Then \[bK_X=f\st(b(K_C+M+B)),\] where $M$ and $B$ are $\Q$-divisors on $C$ such that
	\begin{enumerate}[(1)]
		\item For all $P\in C$, let $n_P$ be the smallest integer such that $n_PM$ is Cartier at $P$.
			Then $\phi(n_P)\leq \dim_{\Cc}H^2_{prim}(\tl{F},\Cc)$. Here $\phi$ is Euler's function and $\tl{F}$ is the canonical cover of $F$.
		\item $B=\sum_{P\in C} s_PP$, where $s_P=1-\frac{v_P}{bn_pu_p}$ for some $u_p, v_p\in\N$
			satisfying $1\leq v_p\leq bn_p$.
		\item $u_P$ satisfies the following condition: $\lambda f\st P$ is integral if and only if $\lambda\in\frac{1}{u_P}\Z$.
	\end{enumerate}
\end{thm}
\begin{proof}
	Since $K_X$ is semi-ample, the exceptional term in the formula in \cite[Proposition 2.2]{fm} is zero. Hence we have
	$bK_X=f\st(b(K_C+M+B))$. (1) follows from \cite[3.6, 3.8]{fm} and (2) follows form the proof of \cite[Theorem 4.5]{fm}.
	(3) follows form the observation that $f\st(bn_PB)=bn_PK_{X/C}-f\st(bn_P M)$ is an integral Weil divisor over $P$.
\end{proof}

We will write $A=K_C+M+B$ for convenience.\par
\begin{lem}\label{cbf} 
	We have $f\ts\Oo_X(rbK_X)=\Oo_C(\lfloor rbA\rfloor)$ for all integers $r\geq0$.
	In particular, if $C\cong\Pp^1$ and $h^0(X,rbK_X)\geq2$, then $|rbK_X|$ defines the Iitaka fibration.
\end{lem}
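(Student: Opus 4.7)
The plan is to derive the first identity directly from the canonical bundle formula via the projection formula, and then deduce the second assertion from the behavior of complete linear systems on $\Pp^1$.

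The canonical bundle formula $bK_X = f^*(bA)$ recorded in Section \ref{scbf} has two immediate consequences. First, since $bK_X$ is Cartier on $X$, the pulled-back divisor $bA$ must have integer coefficients on the smooth curve $C$, so $\lfloor rbA\rfloor = rbA$ for every $r\in\N$. Second, multiplying by $r$ yields $rbK_X = f^*(rbA)$, and hence by the projection formula
\[
f_*\Oo_X(rbK_X) \;=\; \Oo_C(rbA)\otimes f_*\Oo_X.
\]
The Iitaka fibration has connected fibers and $X$ is normal (being terminal), so $f_*\Oo_X = \Oo_C$, and the first identity follows.

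For the second assertion, assume $C\cong\Pp^1$ and $h^0(X,rbK_X)\geq 2$. Taking global sections in the first identity gives $H^0(X,rbK_X)=H^0(C,\Oo_C(\lfloor rbA\rfloor))$, so the divisor $\lfloor rbA\rfloor$ on $\Pp^1$ has degree at least one; its complete linear system then defines a birational morphism from $\Pp^1$ onto its image (realizing $\Pp^1$ as a rational normal curve). The pluricanonical map defined by $|rbK_X|$ factors as $f$ composed with this birational morphism, so it has the same fibers as $f$ and therefore defines the Iitaka fibration. No step is genuinely delicate here; the proof is essentially bookkeeping with the canonical bundle formula, the two points that warrant explicit mention being the integrality of $bA$ and the identification $f_*\Oo_X=\Oo_C$.
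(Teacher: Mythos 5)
There is a genuine gap at the very first step. You claim that because $bK_X$ is Cartier and $bK_X=f^*(bA)$, the divisor $bA$ must have integer coefficients, so that $\lfloor rbA\rfloor=rbA$. This fails on two counts. First, $X$ is only terminal, so $K_X$ is merely $\Q$-Cartier, and $b$ (the smallest integer with $|bK_F|\neq\emptyset$) need not be divisible by the Cartier index of $K_X$; thus $bK_X$ need not be Cartier at all. Second, and more fundamentally, even when the upstairs divisor is integral the implication does not follow: $f^*$ multiplies the coefficient of a point $P\in C$ by the multiplicities of the components of the fiber over $P$, so a fractional coefficient at a point lying under a multiple fiber can pull back to an integral divisor (this is exactly the multiple-fiber phenomenon in Kodaira's canonical bundle formula for elliptic surfaces). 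Indeed the boundary $B=\sum s_PP$ with $s_P=1-\frac{v_P}{bn_Pu_P}$ recorded in Section \ref{scbf} is genuinely fractional in general, and the floor in $\lfloor rbA\rfloor$ genuinely matters; if your argument were correct, $\deg\lfloor rbA\rfloor$ would be exactly linear in $r$, which is false in general and would trivialize much of the later analysis in Sections \ref{sabl} and \ref{sbie}.

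Because of this, the identity $f_*\Oo_X(rbK_X)=\Oo_C(\lfloor rbA\rfloor)$ is not a formal consequence of the projection formula; it is the content of \cite[Proposition 2.2]{fm}, which is what the paper invokes. The only additional input in the paper's proof is that $f_*\Oo_X(rbK_X)$ is torsion-free, hence locally free on the smooth curve $C$, hence equal to its double dual, which removes the reflexive hull appearing in Fujino--Mori's statement. Your treatment of the second assertion (a degree $\geq1$ line bundle on $\Pp^1$ is very ample, so the pluricanonical map factors as $f$ followed by an embedding of $C$) is fine and agrees with the paper, but the first half of the lemma needs to be repaired by citing the Fujino--Mori result rather than deriving it.
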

\begin{proof}
	By \cite[Proposition 2.2]{fm} and the projection formula we have $f\ts\Oo_X(rbK_X)^{\ast\ast}=\Oo_C(\lfloor rbA\rfloor)$.
	Since $f\ts\Oo_X(rbK_X)$ is torsion free, it is locally free, hence $f\ts\Oo_X(rbK_X)^{\ast\ast}=f\ts\Oo_X(rbK_X)$ and we have
	\[f\ts\Oo_X(rbK_X)=\Oo_C(\lfloor rbA\rfloor).\]
	Now if $C\cong \Pp^1$ and $H^0(X,rbK_X)\geq 2$, then $\lfloor rbA\rfloor$ has positive degree, hence very ample.
	Thus $H^0(X,\Oo_X(rbK_X))=H^0(X,f\ts\Oo_X(rbK_X))=H^0(C,\Oo_C(\rd{rbA})$ defines the morphism $X\rightarrow C$.
\end{proof}

\subsubsection{Canonical bundle formula for two-dimensional elliptic fibrations}
Let $S$ be a smooth surface of Kodaira dimension one and let $f\colon S\rightarrow C$ be the Iitaka fibration, which is an elliptic fibration.
We introduce the connection between Kodaira's canonical bundle formula for elliptic fibrations and Fujino-Mori's formula we mentioned above.\par
Let $U\subset C$ be the smooth locus of $S\rightarrow C$. We can define a morphism $J_U\colon U\rightarrow\Cc$ which is induced by the $j$-function
of $f^{-1}(P)$ for all $P\in U$ (see \cite[Section 2]{fu} for the detail). Since $C$ is a curve, $J_U$ induces a morphism
$J\colon C\rightarrow \Pp^1$. 
\begin{lem}\label{efcbf}
	Notation as above. Write $K_S=f\st(K_C+M+B)$ as in Theorem \ref{fmcbf}.
	\begin{enumerate}[(1)]
	\item $12M=J\st\Oo_{\Pp^1}(1)$.
	\item Let $Z_0=\se{P\in C}{f^{-1}(P)\mbox{ is a multiple fiber}}$ and $B_0=B|_{Z_0}$, $B_1=B-B_0$. Then
		$M+B_1$ is integral and \[\deg (M+B_1)=\chi(\Oo_S).\]
	\end{enumerate}
\end{lem}
\begin{proof}
	One can verify (1) by computing the coefficients of $B$ (they are log canonical thresholds by \cite[Proposition 4.7]{fm})
	via Kodaira's explicit classification, and applying \cite[(2.9)]{fu}. (2) follows form \cite[Theorem 12]{kod}.
\end{proof}

\subsection{Koll\'{a}r vanishing theorem}
\begin{thm}[\cite{ko}, Theorem 10.19]\label{ko}
	Let $f\colon X\rightarrow Y$ be a surjective morphism between normal and proper varieties. Let $N$, $N'$ be rank $1$, reflexive,
	torsion-free sheaves on $X$. Assume that $N\equiv K_X+\Delta+f\st M$, where $M$ is a $\Q$-Cartier $\Q$-divisor on $Y$ and
	$(X,\Delta)$ is klt. Then
	\begin{enumerate}[(1)]
	\item $R^jf\ts N$ is torsion free for $j\geq0$.
	\item Assume that in addition that $M$ is nef and big. Then
		\[H^i(Y,R^jf\ts N)=0 \quad \mbox{for }i>0,j\geq0.\]
	\item Assume that $M$ is nef and big and let $D$ be any effective Weil divisor on $X$ such that $f(D)\neq Y$. Then
		\[H^j(X,N)\rightarrow H^j(X,N(D))\quad\mbox{is injective for }j\geq0.\]
	\item If $f$ is generically finite and $N'\equiv K_X+\Delta+F$ where $F$ is a $\Q$-Cartier $\Q$-divisor on $X$ which is $f$-nef, then
		$R^jf\ts N'=0$ for all $j>0$.
	\end{enumerate}
\end{thm}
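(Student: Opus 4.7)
The plan is to reduce all four assertions to Kawamata--Viehweg vanishing on $Y$ (and its relative version) via Kollár's covering trick, together with the torsion-freeness of $R^j g\ts \omega_{\tilde X}$ for morphisms from smooth projective varieties. First I would take a log resolution $\mu \colon X' \to X$ of $(X, \Delta)$, arranging that $\mu\st\Delta$ has simple normal crossing support with the integral part absorbed into $N$. Since $(X, \Delta)$ is klt, the four statements are well-behaved under this resolution step. Then, for $m$ sufficiently divisible to clear denominators of the fractional part, I would form the cyclic cover $\pi \colon \tilde X \to X'$ ramified along a divisor representing that fractional part and resolve it; the upshot is that an appropriate twist of $\pi\st N$ is isomorphic to $\omega_{\tilde X}$ tensored with a pullback from $Y$, and the original $N$ is recovered as an eigensheaf of $\pi\ts$ under the Galois action.

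For (1), this reduces the torsion-freeness of $R^j f\ts N$ to Kollár's classical torsion-freeness theorem for $R^j g\ts \omega_{\tilde X}$, where $g = f \circ \mu \circ \pi$, proved via the $E_1$-degeneration of the Hodge-to-de-Rham spectral sequence together with the decomposition $Rg\ts\omega_{\tilde X} \cong \bigoplus R^j g\ts\omega_{\tilde X}[-j]$ in the derived category of $Y$. For (2), assuming $M$ is nef and big, I would combine (1) with the Leray spectral sequence
\[
E_2^{p,q} = H^p(Y, R^q f\ts N) \Longrightarrow H^{p+q}(X, N)
\]
and apply Kawamata--Viehweg vanishing on $Y$ to each $R^q f\ts N \otimes \Oo_Y(M)$; the structure of the higher direct images obtained from the cyclic cover carries the nefness needed to trigger the vanishing. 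For (4), when $f$ is generically finite, one works locally on $Y$ and applies relative Kawamata--Viehweg directly, since the higher direct images are supported on the image of a proper subscheme.

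For (3), the trick is perturbation: replace $\Delta$ by $\Delta + \epsilon D$ with $\epsilon$ small rational so that the pair remains klt, then combine the short exact sequence $0 \to N \to N(D) \to N(D)|_D \to 0$ with the vanishing from (2) applied to the perturbed pair to force the relevant connecting map to vanish, yielding injectivity. The main obstacle is step (1), specifically the bookkeeping of eigensheaf decompositions under the Galois action of $\pi$ and the verification that torsion-freeness descends through both $\pi$ and $\mu$; one must check in particular that $\mu\ts$ does not introduce torsion, which relies on the klt assumption and on the relative vanishing provided by (4). Once (1) is established, parts (2), (3), and (4) follow from the standard package of spectral sequences, Kawamata--Viehweg on $Y$, and the perturbation trick.
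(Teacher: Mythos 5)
First, a point of order: the paper gives no proof of this statement. It is quoted verbatim as Theorem 10.19 of \cite{ko} and used as a black box, so there is no internal argument to compare against; the only meaningful benchmark is Koll\'ar's original proof. Measured against that, your overall architecture --- log resolution, cyclic covering trick to realize a twist of $N$ as a direct summand of the pushforward of a dualizing sheaf from a smooth projective variety, then Koll\'ar's torsion-freeness theorem via $E_1$-degeneration and the splitting $Rg_*\omega_{\tilde{X}}\cong\bigoplus_j R^jg_*\omega_{\tilde{X}}[-j]$ --- is the standard route, and your reduction of part (1) is essentially correct.

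Two of your reductions, however, do not close as stated. For (2), Kawamata--Viehweg vanishing on $Y$ concerns $\omega_Y$ twisted by a nef and big divisor; it says nothing about $R^qf_*N\otimes\mathcal{O}_Y(M)$, which is an arbitrary torsion-free sheaf twisted by a nef and big class (note also that $M$ is already absorbed into $N$ via $f^*M$, so this twist double-counts, and the Leray spectral sequence runs in the wrong direction: you want to kill the $E_2$ terms, not the abutment). The statement actually needed is Koll\'ar's vanishing theorem for higher direct images of dualizing sheaves, $H^i(Y,R^jf_*\omega_X\otimes L)=0$ for $i>0$ and $L$ ample (extended to nef and big twists by a further cover or perturbation); this is a separate Hodge-theoretic theorem proved from the splitting together with a descending induction from Serre vanishing, not a consequence of Kawamata--Viehweg on $Y$. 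For (3), the perturbation $\Delta\mapsto\Delta+\epsilon D$ does not bring $N(D)$ into the framework of (2): one has $N(D)\equiv K_X+(\Delta+\epsilon D)+f^*M+(1-\epsilon)D$, and the residual term $(1-\epsilon)D$ is vertical but not the pullback of a $\mathbb{Q}$-divisor from $Y$, so the hypotheses of (2) are not satisfied (and klt-ness of $(X,\Delta+\epsilon D)$ is itself unclear when $D$ is merely an effective Weil divisor). In Koll\'ar's treatment the injectivity in (3) rests on a separate Hodge-theoretic injectivity theorem (of Tankeev--Koll\'ar--Esnault--Viehweg type) applied on the cyclic cover, not on a formal perturbation of (2). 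So (1) is fine, but (2) and (3) each require a genuinely different key lemma from the one you invoke.
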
 
\begin{rk}\label{lfree}
	Under our assumption that $X$ is a minimal terminal threefold of Kodaira dimension one and $f\colon X\rightarrow C$ is the Iitaka fibration,
	Theorem \ref{ko} (1) implies that $R^jf\ts(mK_X)$ is locally free because any torsion-free sheaf
	on a curve is locally free.
\end{rk}
\begin{pro}[\cite{ko1},Proposition 7.6]\label{rkk}
	Let $X$, $Y$ be smooth projective varieties, $\dim X=n$, $\dim Y=k$, and let $\pi\colon X\rightarrow Y$ be a surjective map with
	connected fibers. Then $R^{n-k}\pi\ts \omega_X=\omega_Y$.
\end{pro}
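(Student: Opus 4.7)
The plan is to exhibit a natural trace morphism $\mathrm{tr}:R^{n-k}\pi\ts\omega_X\to\omega_Y$ produced by Grothendieck--Serre duality, and to verify it is an isomorphism by first handling the open locus where $\pi$ is smooth and then extending it across the degenerate locus using torsion-freeness of higher direct images of $\omega_X$.

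First, since $X$ and $Y$ are smooth and $\pi$ is projective, the relative dualizing complex is $\pi^{!}\omega_Y=\omega_X[n-k]$. Applying Grothendieck duality to the structure sheaf gives
\[
R\pi\ts\omega_X[n-k]\ \cong\ R\mathcal{H}om_{\Oo_Y}\!\bigl(R\pi\ts\Oo_X,\,\omega_Y\bigr).
\]
Connectedness of fibres together with normality of $Y$ forces $\pi\ts\Oo_X=\Oo_Y$, so $\Oo_Y$ sits as $\mathcal{H}^0$ of $R\pi\ts\Oo_X$. Taking $\mathcal{H}^0$ of both sides of the displayed isomorphism produces the canonical morphism $\mathrm{tr}:R^{n-k}\pi\ts\omega_X\to\mathcal{H}om(\Oo_Y,\omega_Y)=\omega_Y$.

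Next, by generic smoothness in characteristic zero, there is a dense open $V\subset Y$ over which $\pi$ is smooth, proper, and has connected fibres of dimension $n-k$. On $V$, relative Serre duality for smooth proper morphisms, combined with cohomology-and-base-change, gives $R^{n-k}\pi\ts\omega_{X/Y}|_V\cong\Oo_V$; twisting by $\omega_Y$ via the projection formula shows that $\mathrm{tr}|_V$ is an isomorphism.

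The main obstacle is extending this isomorphism across $Y\setminus V$, where $\pi$ degenerates. For this I would invoke Koll\'ar's torsion-freeness (Theorem~\ref{ko}(1) applied with $\Delta=0$ and $M=0$): the sheaf $R^{n-k}\pi\ts\omega_X$ is torsion-free, so $\ker(\mathrm{tr})$, being supported on the proper closed subset $Y\setminus V$, must vanish. Surjectivity is the more delicate point; I would treat it by dualising back via Grothendieck duality, using Grauert--Riemenschneider vanishing $R^i\pi\ts\omega_X=0$ for $i>n-k$ to control the spectral-sequence bookkeeping, or alternatively by a local analysis at the generic point of each codimension-one component of $Y\setminus V$ to verify directly that the cokernel has no embedded support on a divisor. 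I expect the bulk of the work to lie in making this extension step rigorous; everything else is formal duality.
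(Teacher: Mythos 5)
This proposition is quoted verbatim from Koll\'ar's \emph{Higher direct images of dualizing sheaves I} and the paper offers no proof of it (only the subsequent Corollary~\ref{rk}, which extends it to canonical singularities, is proved there), so there is no internal argument to measure you against; I can only assess your sketch on its own terms. The first half is sound and is indeed the standard route: Grothendieck duality with $\pi^{!}\omega_Y=\omega_X[n-k]$ gives the trace map, relative Serre duality over the smooth locus $V$ shows it is generically an isomorphism, and Koll\'ar's torsion-freeness of $R^{n-k}\pi\ts\omega_X$ kills the kernel. (Note, though, that torsion-freeness is itself the deep theorem of the very paper being cited, so at best you are reducing Proposition 7.6 to Theorem 2.1 of the same source, not giving an independent proof.)

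The genuine gap is surjectivity, exactly where you place it, and neither of your two proposed remedies closes it as stated. The injection $\mathrm{tr}:R^{n-k}\pi\ts\omega_X\hookrightarrow\omega_Y$ can perfectly well have cokernel $\Oo_Z\otimes\omega_Y$ for a nonempty closed $Z\subset Y\setminus V$; already for $Y$ a curve this is the difference between $\Oo_C(-p)$ and $\Oo_C$, so something beyond torsion-freeness must be used. Your second route (checking at generic points of divisorial components of $Y\setminus V$) would only show the map is an isomorphism in codimension one, and a torsion-free sheaf that agrees with a line bundle in codimension one need not equal it unless it is reflexive --- a property you have not established for $R^{n-k}\pi\ts\omega_X$. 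Your first route leans on ``$R^i\pi\ts\omega_X=0$ for $i>n-k$ by Grauert--Riemenschneider,'' but Grauert--Riemenschneider concerns generically finite maps; for a fiber space whose special fibers may have dimension exceeding $n-k$, this vanishing is again a nontrivial theorem of Koll\'ar, and the ``spectral-sequence bookkeeping'' you defer (identifying $\mathcal{H}^0$ of $R\mathcal{H}om(R\pi\ts\Oo_X,\omega_Y)$ with the $E_2^{0,0}$-term, i.e.\ controlling the higher $\mathcal{E}xt^p(R^p\pi\ts\Oo_X,\omega_Y)$ contributions) is precisely where the content lies. As it stands the proposal is an accurate roadmap to Koll\'ar's result rather than a proof of it.
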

\begin{cor}\label{rk}
	The same conclusion of Proposition \ref{rkk} holds if $X$ has canonical singularities.
\end{cor}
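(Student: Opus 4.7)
The plan is to reduce to the smooth case via a resolution of singularities. Let $g:\tl{X}\rightarrow X$ be a resolution, so $\tl{X}$ is smooth projective of dimension $n$ and $g$ is birational. The composition $h=\pi\circ g:\tl{X}\rightarrow Y$ is then a surjective morphism of smooth projective varieties with connected fibers (connectedness is inherited from $\pi$ because $g$ is birational). Applying the preceding proposition to $h$ yields
\[R^{n-k}h\ts\omega_{\tl{X}}=\omega_Y.\]

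The next step is to identify $R^{n-k}h\ts\omega_{\tl{X}}$ with $R^{n-k}\pi\ts\omega_X$. For this I would invoke two standard facts: since $X$ has canonical singularities, $g\ts\omega_{\tl{X}}=\omega_X$; and by the Grauert--Riemenschneider vanishing theorem, $R^qg\ts\omega_{\tl{X}}=0$ for all $q>0$. Feeding these into the Leray spectral sequence
\[E_2^{p,q}=R^p\pi\ts R^qg\ts\omega_{\tl{X}}\Longrightarrow R^{p+q}h\ts\omega_{\tl{X}},\]
the $E_2$ page is concentrated in the row $q=0$, so the spectral sequence degenerates and produces
\[R^ph\ts\omega_{\tl{X}}=R^p\pi\ts(g\ts\omega_{\tl{X}})=R^p\pi\ts\omega_X\]
for every $p$. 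Taking $p=n-k$ and combining with the first display gives the desired equality.

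The main obstacle, modest as it is, is the input $g\ts\omega_{\tl{X}}=\omega_X$, which must be justified from the definition of canonical singularities together with the correct identification of the dualizing sheaf $\omega_X$ on the singular variety; this is where the hypothesis on $X$ enters in an essential way. Once this is granted, the argument is a formal combination of Grauert--Riemenschneider vanishing, the Leray spectral sequence, and the smooth case already established in the previous proposition.
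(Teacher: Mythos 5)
Your argument is correct and is essentially the proof given in the paper: both pass to a resolution, apply the smooth case to the composite morphism, and collapse the Grothendieck--Leray spectral sequence using Grauert--Riemenschneider vanishing. The only difference is that you make explicit the input $g\ts\omega_{\tl{X}}=\omega_X$ for canonical singularities, which the paper uses silently in its final displayed equality.
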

\begin{proof}
	Let $\phi\colon \tl{X}\rightarrow X$ be a resolution of $X$ and $\tl{\pi}\colon \tl{X}\rightarrow Y$ be the composition of $\pi$ and $\phi$.
	By the Grothendieck spectral sequence, we have
	\[E^{p,q}_2=R^p\pi\ts(R^q\phi\ts \omega_{\tl{X}})\Rightarrow R^{p+q}\tl{\pi}\ts\omega_{\tl{X}}.\]
	By Grauert-Riemenschneider vanishing (or Theorem \ref{ko} (4) above) we have $R^q\phi\ts\omega_{\tl{X}}=0$ for all $q>0$,
	hence \[\omega_Y=R^{n-k}\tl{\pi}\ts\omega_{\tl{X}}=R^{n-k}\pi\ts(\phi\ts\omega_{\tl{X}})=R^{n-k}\pi\ts\omega_X.\]
\end{proof}

Now assume that $X$ is a minimal terminal threefold of Kodiara dimension one and $X\rightarrow C$ is the Iitaka fibration.
Let $F$ be a general fiber. We may write $K_X=f\st A$. Let $\lambda=1/\deg A$, hence we have $F\equiv\lambda K_X$.
\begin{lem}\label{deg} $h^0(X,mK_X)\geq r$ if $m>\lambda r+1$ and $|mK_F|$ is non-empty.
\end{lem}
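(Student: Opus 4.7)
\medskip

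\noindent\emph{Proof plan.} The plan is to obtain the sections of $mK_X$ by lifting sections from a union of general fibers, using Koll\'ar's injectivity theorem to kill the obstruction in $H^1$. Concretely, I would first choose $r$ distinct general points $Q_1,\dots,Q_r\in C$, so that the scheme-theoretic fibers $F_i:=f^{-1}(Q_i)$ are smooth and pairwise disjoint. Since each $F_i$ is a fiber, $F_i|_{F_i}=0$, and adjunction gives $K_X|_{F_i}=K_{F_i}$. This yields the short exact sequence
\[
0\longrightarrow \Oo_X\bigl(mK_X-\textstyle\sum_i F_i\bigr)\longrightarrow \Oo_X(mK_X)\longrightarrow \bigoplus_{i=1}^{r}\Oo_{F_i}(mK_{F_i})\longrightarrow 0.
\]

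The key step is to verify that Koll\'ar vanishing, Theorem \ref{ko}(3), applies to $N:=\Oo_X(mK_X-\sum_iF_i)$ with $D:=\sum_iF_i$. Since $K_X=f\st A$, I would write
\[
N\equiv K_X+f\st\!\bigl((m-1)A-\textstyle\sum_i Q_i\bigr),
\]
so with $\Delta=0$ (which is klt because $X$ is terminal) the relevant $\Q$-divisor on $C$ is $M=(m-1)A-\sum_iQ_i$, of degree $(m-1)/\lambda-r$. The hypothesis $m>\lambda r+1$ is exactly what makes $\deg M>0$, i.e.\ $M$ ample on the smooth curve $C$. Also $f(D)=\{Q_1,\dots,Q_r\}\neq C$, so Theorem \ref{ko}(3) gives the injectivity $H^1(X,N)\hookrightarrow H^1(X,N(D))=H^1(X,mK_X)$.

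Feeding this into the long exact sequence associated to the displayed short exact sequence, the connecting map $\bigoplus_i H^0(F_i,mK_{F_i})\to H^1(X,N)$ lands in the kernel of an injective map, hence is zero. Therefore the restriction $H^0(X,mK_X)\to\bigoplus_i H^0(F_i,mK_{F_i})$ is surjective, which together with $h^0(F,mK_F)\geq 1$ yields $h^0(X,mK_X)\geq r\cdot h^0(F,mK_F)\geq r$, as desired.

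The only real obstacle is matching the hypothesis to Koll\'ar's setup; once the numerical equivalence $N\equiv K_X+f\st M$ and the ampleness $\deg M>0$ are checked, the rest is bookkeeping with the long exact sequence. I would not expect any subtlety from the singularities of $X$, since terminal implies klt for the trivial boundary, nor from the fact that $A$ is only a $\Q$-divisor, since Theorem \ref{ko}(3) is stated for $\Q$-Cartier $M$.
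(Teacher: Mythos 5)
Your proposal is correct and follows essentially the same route as the paper: restrict to $r$ general fibers, identify $mK_X-\sum F_i\equiv K_X+f\st M$ with $\deg M>0$ exactly when $m>\lambda r+1$, and invoke Theorem \ref{ko}(3) to get surjectivity of the restriction map. The only cosmetic difference is that the paper writes $M$ as a multiple of a single general point and uses that $mK_F\sim 0$ (so $\Oo_{F_i}(mK_X)=\Oo_{F_i}$), whereas you keep $\bigoplus_i H^0(F_i,mK_{F_i})$ and use the non-emptiness hypothesis directly; both are equivalent here.
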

\begin{proof}
	Choose $r$ general fibers $F_1$, ..., $F_r$. Consider the exact sequence
	\[0\rightarrow\Oo_X(mK_X-F_1-...-F_r)\rightarrow\Oo_X(mK_X)\rightarrow\bigoplus_{i=1}^r\Oo_{F_i}(mK_X)
		=\bigoplus_{i=1}^r\Oo_{F_i}\rightarrow0.\]
	Note that $mK_X-F_1-...-F_r\equiv K_X+(m-1-\lambda r)K_X$. By our assumption we have $m-1-\lambda r>0$, hence
	\[H^1(X,\Oo_X(mK_X-F_1-...-F_r))\rightarrow H^1(X,\Oo_X(mK_X))\]
	is injective by Theorem \ref{ko} (3) with $M=\frac{m-1-\lambda r}{\lambda}P$, $D=F_1+...+F_r$ and $\Delta=0$,
	here $P$ is a general point on $C$. Hence \[H^0(X,\Oo_X(mK_X))\rightarrow \bigoplus_{i=1}^rH^0(F_i,\Oo_{F_i})\]
	is surjective and so $h^0(X,\Oo_X(mK_X))\geq r$.
\end{proof}

\subsection{The singular Riemann-Roch formula}\label{srr}
When studying terminal threefolds, a basic tool is Reid's singular Riemann-Roch formula \cite{re}:
\begin{equation}\label{eqrr}
\begin{split}
	\chi(\Oo_X(D))=\chi(\Oo_X)&+\frac{1}{12}D(D-K_X)(2D-K_X)+\frac{1}{12}D.c_2(X)\\&+
	\sum_{P\in\B(X)}\left(-i_P\frac{r_P^2-1}{12r_P}+\sum_{j=1}^{i_P-1}\frac{\overline{jb_P}(r_P-\overline{jb_P})}{2r_P}\right),
\end{split}
\end{equation}

where $\B(X)=\{(r_P,b_P)\}$ is the basket data of $X$ (see \cite[Section 10]{re} for a more precise definition) and $i_P$
is the smallest integer such that $\Oo_X(D)\cong\Oo_X(i_PK_X)$ near $P$.\par
Take $D=K_X$, one has
\begin{equation}\label{eqc2} K_X.c_2(X)=-24\chi(\Oo_X)+\sum_{P\in\B(X)}\left(r_P-\frac{1}{r_P}\right).\end{equation}
Now take $D=mK_X$ and replace $K_X.c_2(X)$ by $\chi(\Oo_X)$ and the contribution of singularities, we get
the following plurigenus formula \cite[Section 2]{ch}:
\[\chi(mK_X)=\frac{1}{12}m(m-1)(2m-1)K_X^3+(1-2m)\chi(\Oo_X)+l(m),\] here
	\[l(m)=\sum_{P\in\B(X)}\sum_{j=1}^{m-1}\frac{\overline{jb_P}(r_P-\overline{jb_P})}{2r_P}.\]
If one assumes that $X$ is minimal and of Kodaira dimension one, then $K^3_X=0$ and one has
\begin{equation}\label{e2}
	\chi(mK_X)=(1-2m)\chi(\Oo_X)+l(m).
\end{equation}
\begin{lem}\label{chi}
	Assume that $X$ is minimal and of Kodaira dimension one and $F$ is a general fiber of the Iitaka fibration of $X$.
	If $F\equiv \lambda K_X$, then \[\sum_{P\in\B(X)}\left(r_P-\frac{1}{r_P}\right)=24\chi(\Oo_X)+\frac{12}{\lambda}\chi(\Oo_F).\]
\end{lem}
\begin{proof}
	Consider the exact sequence \[0\rightarrow\Oo_X(K_X)\rightarrow\Oo_X(K_X+F)\rightarrow\Oo_F(K_F)\rightarrow0.\]
	We have \[\chi(\Oo_F)=\chi(K_F)=\chi(K_X+F)-\chi(K_X)=\frac{1}{12}F.c_2(X)=\frac{\lambda}{12}K_X.c_2(X),\]
	where the third equation follows from equation (\ref{eqrr}).
	Thus \[\frac{12}{\lambda}\chi(\Oo_F)=K_X.c_2(X)=-24\chi(\Oo_X)+\sum_{P\in\B(X)}\left(r_P-\frac{1}{r_P}\right)\]
	by equation (\ref{eqc2}). This proves our lemma.
\end{proof}
\subsection{Surfaces of Kodaira dimension zero}
Let $X$ be a threefold of Kodaira dimension one and let $X\rightarrow C$ be the Iitaka fibration. Then a general fiber of $X\rightarrow C$
is a surface of Kodaira dimension zero. We will briefly introduce the classification of surfaces of Kodaira dimension zero. 
For more detail, see \cite[Section VIII]{b}.\par
Let $S$ be a minimal surface of Kodaira dimension zero. The $S$ belongs to one of the following classes:
\begin{enumerate}
\item $h^0(S,K_S)=1$, $h^1(S,K_S)=0$. We have $K_S\sim 0$. $S$ is called a K3 surface.
\item $h^0(S,K_S)=h^1(S,K_S)=0$. We have $K_S\not \sim 0$ but $2K_S\sim 0$. $S$ is a quotient of a K3 surface by a fixed-point free involution.
	In this case $S$ is called an Enriques surface.
\item $h^0(S,K_S)=1$, $h^1(S,K_S)=2$. We have $K_S\sim 0$ and $S$ is an abelian surface.
\item $h^0(S,K_S)=0$, $h^1(S,K_S)=1$. We have $K_S\not\sim0$ but either $2K_S$, $3K_S$, $4K_S$ or $6K_S\sim 0$. $S$ is a quotient of a
	product of two elliptic curves by a finite group, and is called a bielliptic surface.
\end{enumerate}
In conclusion, we know that $12K_S\sim 0$ for all surfaces of Kodaira dimension zero.

\subsection{Weak positivity}

\begin{thm}[\cite{v}, Theorem III]
	Let $g\colon T\rightarrow W$ be any surjective morphism between non-singular projective varieties. Then $g\ts\omega_{T/W}^k$
	is weakly positive for any $k>0$.
\end{thm}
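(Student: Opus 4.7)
The plan is to sketch Viehweg's original strategy, which breaks naturally into three pieces: reduce to a sufficiently well-behaved fibration, handle the case $k=1$ by Hodge-theoretic semipositivity, and then bootstrap to arbitrary $k$ via a cyclic covering trick.

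First I would recall that weak positivity of a torsion-free sheaf $\F$ on $W$ means that for every ample line bundle $H$ and every $\alpha>0$ there exists $\beta>0$ such that $\hat{S}^{\alpha\beta}\F\otimes H^\beta$ is generically generated by global sections. After replacing $T$ and $W$ by suitable birational models and applying semistable reduction together with Kawamata's covering trick, I would reduce to the situation in which $g$ is smooth over the complement of a simple normal crossing divisor and has connected fibres. These reductions are compatible with the weak positivity statement, and they put the variation of Hodge structure associated with $g$ in a form amenable to standard Hodge-theoretic input.

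For $k=1$ I would invoke Kawamata's theorem (generalising Fujita): in the reduced situation, $g\ts\omega_{T/W}$ is the canonical extension of the top Hodge bundle of the polarizable variation of Hodge structure $R^{n-k}g\ts\Cc$ (where $n-k$ is the fibre dimension), and its underlying locally free sheaf is semipositive on $W$. Semipositivity implies weak positivity by a standard argument on the projectivisation and passing to quotients, so the $k=1$ case follows.

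For $k\geq 2$ the central idea is to trade a power of the relative dualizing sheaf for a cover. Given a general section of $\omega_{T/W}^{kN}$ for a sufficiently divisible $N$, one forms a cyclic cover $T'\to T$ of degree $kN$ ramified along its zero divisor; on a resolution $\tilde{T}'$, the sheaf $\omega_{\tilde{T}'/W}$ splits into eigen-summands under the Galois action, and one of these summands is canonically identified with $\omega_{T/W}^k$ twisted by a pullback of a line bundle on $W$. Pushing forward and applying the $k=1$ case to $\tilde{T}'\to W$ then yields the weak positivity of a sheaf containing $g\ts\omega_{T/W}^k$ as a direct summand up to twists that can be absorbed into the ample $H$ in the definition. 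The hardest part is making the construction uniform in $\alpha$ and $k$ simultaneously, since weak positivity must be checked for all symmetric powers and all ample twists at once; Viehweg overcomes this by working on the $s$-fold fibre products $T^{(s)}=T\times_W\cdots\times_W T$, whose desingularisations carry relative dualizing sheaves whose direct images dominate symmetric and tensor powers of $g\ts\omega_{T/W}^k$, and by carefully tracking positivity through base change, resolution, and the invariant-taking in the cyclic cover.
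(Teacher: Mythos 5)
This statement is not proved in the paper at all: it is quoted verbatim from Viehweg \cite{v} and used as a black box, only through the consequence (recorded in the remark that follows it) that $\deg g\ts\omega_{T/W}^k\geq 0$ when $W$ is a curve. So there is no internal argument to compare yours against; what you have written is an outline of Viehweg's original proof, and as such it is accurate in its architecture: unipotent/semistable reduction via Kawamata coverings to put the VHS in good shape, the Fujita--Kawamata semipositivity of the canonical extension of the bottom Hodge piece for $k=1$, and the cyclic-cover plus fibre-product bootstrap for $k\geq 2$.

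Two points need care before this could be called a proof rather than a sketch. First, ``a general section of $\omega_{T/W}^{kN}$'' need not exist --- for the fibrations relevant to this paper $\omega_{T/W}$ is far from big --- so the cyclic cover must be built from a section of $\omega_{T/W}^{kN}\otimes g^{\ast}A^{N}$ for an ample $A$ on $W$ (such sections exist over a dense open set once one knows relative generation), and what comes out of the $k=1$ case applied to the cover is then weak positivity of $g\ts\omega_{T/W}^{k}\otimes A$, not of $g\ts\omega_{T/W}^{k}$. You do gesture at the fix (the $s$-fold fibre products, which let one take the $s$-th ``root'' of the twist by $A$ as $s\to\infty$, which is exactly what the definition of weak positivity tolerates), but that step is the actual content of Viehweg's argument and deserves more than a closing clause; as written, the logical order of quantifiers ($\alpha$, then $\beta$, uniformly in $A$) is where a careless write-up would fail. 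Second, a small notational clash: you use $k$ both for the exponent of $\omega_{T/W}$ and for $\dim W$ in ``$R^{n-k}g\ts\Cc$''; the Hodge-theoretic input concerns $R^{\dim T-\dim W}g\ts\Cc$ regardless of the exponent. Neither issue is a wrong idea, but the first is the heart of the theorem and should not be compressed.
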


\begin{rk}
	When $T\rightarrow W$ is the Iitaka fibration of $T$ and $W$ is a curve, we have $g\ts\omega^k_{T/W}$ is a line bundle by remark \ref{lfree}.
	By \cite[Section 5]{k3}, $g\ts\omega^k_{T/W}$ is pseudo-effective, which is equivalent to say that $\deg g\ts\omega^k_{T/W}\geq0$.\par
	Furthermore, the same conclusion holds if $T$ has canonical singularities because if $\phi\colon \tl{T}\rightarrow T$ is
	a resolution of singularity, then $\phi\ts\omega^k_{\tl{T}}=\omega^k_T$.\par
\end{rk}
\begin{pro}\label{nonr}
	Let $X$ be a minimal terminal threefold with Kodaira dimension one and let $f\colon X\rightarrow C$ be the Iitaka fibration of $X$.
	Assume that $g(C)\geq1$.
	Let $F$ be a general fiber and let $b$ be an integer such that $|bK_F|$ is non-empty and $b\geq2$. 
	Then $|bK_X|$ is non-empty and $|3bK_X|$ defines the Iitaka fibration.\par
	In particular, $|mK_X|$ defines the Iitaka fibration for all $m\geq 24$ and divisible by $12$.
\end{pro}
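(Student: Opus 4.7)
The plan is to combine Viehweg's weak positivity theorem (which on a curve gives $\deg f\ts\omega_{X/C}^b \geq 0$, by the remark following its statement) with Lemma \ref{cbf} and Lemma \ref{deg}.

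For non-emptiness of $|bK_X|$: by Remark \ref{lfree}, the pushforward $L:=f\ts\Oo_X(bK_X)$ is a line bundle on $C$, and it is non-zero because $|bK_F|\neq\emptyset$ on the general fiber. The projection formula gives $L=f\ts\omega_{X/C}^b\otimes\omega_C^b$, so weak positivity yields
\[ \deg L\;\geq\; b\deg\omega_C \;=\; 2b(g(C)-1). \]
When $g(C)\geq 2$ and $b\geq 2$, one checks $2b(g(C)-1)\geq 2g(C)-1$, so $h^1(L)=0$ by Serre duality and Riemann-Roch forces $h^0(L)\geq g(C)\geq 1$. When $g(C)=1$, weak positivity only gives $\deg L\geq 0$; here I would exploit the explicit form $A=K_C+M+B$ of the canonical bundle formula, together with ampleness of $A$ and effectivity of $B$, to force $\deg\lfloor bA\rfloor\geq 1$ for $b\geq 2$.

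For the second assertion, I would apply Lemma \ref{deg} with $m=3b$ and $r=2$: two general fibers $F_1, F_2$ produce sections of $|3bK_X|$ vanishing along different fibers, so $\phi_{|3bK_X|}$ separates the corresponding points of $C$. Since $K_X=f\st A$, this rational map factors through $f$ and hence defines the Iitaka fibration birationally. The numerical hypothesis $3b>2\lambda+1$ of Lemma \ref{deg} follows from the lower bound on $\deg A=1/\lambda$ obtained in the first step.

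The hardest part will be the case $g(C)=1$ of the first step: on an elliptic curve, a non-trivial degree-zero line bundle has no sections, so the bound $\deg L\geq 0$ coming from weak positivity is not automatically enough, and a delicate analysis of the fractional parts in $\lfloor bA\rfloor$ is required.

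For the ``in particular'' statement, every minimal surface of Kodaira dimension zero has $K_F$ torsion of order dividing $12$, so the smallest $b\geq 2$ with $|bK_F|\neq\emptyset$ is at most $6$ (realized by bielliptic surfaces with $6K_F\sim 0$ but no smaller multiple trivial). Applying the main part of the proposition with $b=6$ gives $|6K_X|\neq\emptyset$ and $|18K_X|$ defining the Iitaka fibration; adding sections of $|6K_X|$ then produces $|mK_X|$ defining the Iitaka fibration for every $m\in\{18,24,30,36,\ldots\}$, which contains all $m\geq 24$ divisible by $12$.
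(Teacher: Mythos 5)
Your treatment of the case $g(C)\geq2$ and your use of Lemma \ref{deg} with $r=2$ to get generic separation of fibers are sound and close in spirit to the paper's argument (the paper instead shows $f\ts(2bK_X)$ has degree $>2g(C)+1$, hence is very ample). The genuine gap is exactly where you flag it: the case $g(C)=1$. Your plan there is a ``delicate analysis of the fractional parts in $\lfloor bA\rfloor$'' using only ampleness of $A$ and effectivity of $B$, but no such purely numerical argument can succeed. Nothing in the structure of $M$ and $B$ rules out, say, $B=0$ and $M=\frac{1}{12}P$ with $b=2$: then $\deg A=\frac{1}{12}>0$ and yet $\deg\lfloor bA\rfloor=0$, and a degree-zero line bundle on an elliptic curve has no sections unless it is trivial. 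The point is that the needed positivity is not numerical but cohomological: the paper applies Koll\'ar's vanishing (Theorem \ref{ko} (2)) to conclude $H^1(C,f\ts(bK_X)\otimes P)=0$ for \emph{every} $P\in\mathrm{Pic}^0(C)$; if $\deg f\ts(bK_X)$ were $0$ one could take $P=(f\ts(bK_X))\st$ and contradict $h^1(C,\Oo_C)=1$. This forces $\deg f\ts(bK_X)\geq1$, hence $h^0(bK_X)\neq0$ and $\deg f\ts(3bK_X)\geq3$, which is the input your argument is missing. Without some such vanishing statement your $g(C)=1$ case does not close.

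A secondary, smaller issue is the ``in particular'' step: you cannot always take $b=6$, since a bielliptic surface with $K_F$ of order $4$ has $|6K_F|=|2K_F|=\emptyset$. The correct statement is that the minimal admissible $b$ lies in $\{2,3,4,6\}$, and one checks in each case separately that every $m\geq24$ divisible by $12$ is $\geq 3b$ and divisible by $b$; this is how the paper concludes.
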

\begin{proof}
	First assume that $g(C)\geq2$. By weak-positivity, \[\deg f\ts(bK_X)\geq\deg bK_C=b(2g(C)-2).\] We have
	\begin{align*}
	H^0(X,bK_X)=H^0(C,f\ts(bK_X))&\geq\chi(f\ts(bK_X))\\&=\deg f\ts(bK_X)+1-g(C)\geq(2b-1)(g(C)-1)>0.
	\end{align*}
	Moreover, $\deg f\ts(2bK_X)\geq 8g(C)-8>2g(C)+1$ under our assumption that $b\geq2$, hence $f\ts(2bK_X)$ is very-ample,
	which implies $|2bK_X|$ defines the Iitaka fibration. Since $H^0(X,bK_X)\neq0$, $|3bK_X|$ also defines the Iitaka fibration.\par
	Now assume that $g(C)=1$. One has $\deg f\ts(bK_X)\geq0$ by the weak positivity. Moreover $H^1(C,f\ts(bK_X)\otimes P)=0$ for all
	$P\in Pic^0(C)$ by Theorem \ref{ko} (2). Hence $\deg f\ts(bK_X)$ is positive since otherwise after taking $P=(f\ts(bK_X))\st$
	we get $H^1(C,\Oo_C)=0$, which is a contradiction. Thus $h^0(bK_X)\neq0$.
	Now we have $\deg f\ts(3bK_X)\geq3$ since one has the natural inclusion	$f\ts(bK_X)^{\otimes3}\rightarrow f\ts(3bK_X)$.
	The conclusion is that $f\ts(3bK_X)$ is very-ample, so that $|3bK_X|$ defines the Iitaka fibration.\par
	Note that $|mK_X|$ defines the Iitaka fibration if $m\geq 3b$ and $m$ divisible by $b$. We know that
	$b\in\{2,3,4,6\}$. It is easy to see that $|mK_X|$ defines the Iitaka fibration for all $m\geq 24$ and divisible by $12$.
\end{proof}
\subsection{Canonical covers}
Let $X$ be a terminal threefold. By \cite[Corollary 1.9]{re2}, there exists a ramified cover $\mu\colon \tl{X}\rightarrow X$,
so called the canonical cover of $X$, such that $\tl{X}$ is terminal Gorenstein and $\mu$ is \'{e}tale in codimension one.\par
We summarize some properties about the canonical cover when $X$ is terminal of Kodaira dimension one.
\begin{lem}\label{cv}
	Let $X$ be a projective terminal threefold of Kodaira dimension one and let $\mu\colon \tl{X}\rightarrow X$ be the canonical cover.
	\begin{enumerate}[(1)]
	\item $\tl{X}$ is also a projective terminal threefold of Kodaira dimension one.
	\item Let $f\colon X\rightarrow C$ (resp. $\tl{f}\colon \tl{X}\rightarrow \tl{C}$) be the Iitaka fibration and
		assume that $K_X=f\st A$ (resp. $K_{\tl{X}}=\tl{f}\st\tl{A}$). Then $\mu\circ f$ factorize through a morphism $\nu:\tl{C}\rightarrow C$.
		We have $\tl{A}=\nu\st A$.
	\item If we write $A=K_C+M+B$ and $\tl{A}=K_{\tl{C}}+\tl{M}+\tl{B}$. Then $\tl{M}=\nu\st M$.
	\end{enumerate}
\end{lem} 
\begin{proof}
	Let $r$ be the global Cartier index of $K_X$ and let $U$ be an affine chart such that $\omega_X^{[r]}(U)$ is free. One can define
	$\Oo_{\tl{X}}(U)=\bigoplus_{i=0}^{r-1} \omega_X^{[i]}(U)$ where the ring structure is induced by the isomorphism
	$\omega_X^{[r]}(U)\cong\Oo_X(U)$. One can glue those affine schemes together and get a Noetherian scheme $\tl{X}$.
	Note that $\tl{X}\rightarrow X$ is proper since since locally the morphism looks like $\Oo_X(U)\rightarrow \Oo_X(U)[t]/(t^r-\sigma)$,
	for some local section $\sigma\in\omega_X^{[r]}(U)$. Hence the pull-back of an ample divisor on $X$ is an ample divisor on
	$\tl{X}$ by the Nakai-Moishezon criterion. Thus $\tl{X}$ is a projective variety. Since $\mu\colon \tl{X}\rightarrow X$ is \'{e}tale in
	codimension one, $K_{\tl{X}}=\mu\st K_X$. We have $K_{\tl{X}}$ is nef and the numerical dimension of $K_{\tl{X}}$ and $K_X$ are the same.
	Hence $\tl{X}$ is minimal of Kodaira dimension one.\par
	(2) follows from the fact that there is an embedding $|mK_X|\hookrightarrow |mK_{\tl{X}}|$ for all $m\in\N$.
	(3) follows from \cite[Proposition 5.5]{a}. 
\end{proof}

\section{K3 or Enriques fibrations}\label{sk3}
Let $X$ be a minimal terminal threefold with Kodaira dimension one and \[f\colon X\rightarrow C\cong\Pp^1\] be the Iitaka fibration.
Let $F$ be the general fiber of $X\rightarrow C$ and assume that $F$ is a K3 surface or an Enriques surface.
We have $H^1(F,K_F)=0$. Thus $R^1f\ts(K_X)=0$ because it is locally free by Remark \ref{lfree}. 
Note that \begin{align*}
h^1(X,\Oo_X)=h^2(X,\Oo_X(K_X))&=h^0(C,R^2f\ts\Oo_X(K_X))+h^1(C,R^1f\ts\Oo_X(K_X))\\&=h^0(C,\Oo_C(K_C))=0.
\end{align*}
Here $h^0(C,R^2f\ts\Oo_X(K_X))=h^0(C,\Oo_C(K_C))$ follows from Corollary \ref{rk}. We also have \begin{align*}
h^2(X,\Oo_X)=h^1(X,\Oo_X(K_X))&=h^0(C,R^1f\ts\Oo_X(K_X))+h^1(C,f\ts\Oo_X(K_X))\\&=h^1(C,f\ts\Oo_X(K_X)). \end{align*}\par
If $F$ is an Enriques surface, then $h^0(F,K_F)=0$, hence $f\ts\Oo_X(K_X)=0$ since it is locally free by Remark \ref{lfree}.
We have $h^2(X,\Oo_X)=0$ and \[h^3(X,\Oo_X)=h^0(X,\Oo_X(K_X))=h^0(C,f\ts\Oo_X(K_X))=0.\] Thus $\chi(\Oo_X)=1$.
When $F$ is a K3 surface by the weak positivity we have \[\deg f\ts\Oo_X(K_X)\geq \Oo_C(K_C)=-2,\]
hence $h^1(C,f\ts\Oo_X(K_X))\leq1$.
Thus $h^2(\Oo_X)\leq1$ which implies $\chi(\Oo_X)\leq2$. If $\chi(\Oo_X)<0$, then $h^0(X,K_X)\geq2$
and $|K_X|$ defines the Iitaka fibration by Lemma \ref{cbf}. From now on we assume that $0\leq\chi(\Oo_X)\leq 2$.\par
Recall that we write $K_X=f\st A$ for some $\Q$-divisor $A$ on $C$.
\begin{pro}\label{k3}
	If $f\colon X\rightarrow C$ is a K3 fibration, then $\deg A\geq\frac{1}{42}$.
	If $f\colon X\rightarrow C$ is an Enriques fibration, we have $\deg A\geq\frac{1}{20}$.
	In particular, $|mK_X|$ defines the Iitaka fibration if $m\geq86$ (resp. $m$ is even and $\geq42$)
	if $X\rightarrow C$ is a K3 (resp. an Enriques) fibration.
\end{pro}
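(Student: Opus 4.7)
Both statements reduce to proving the numerical bound on $\lambda$: assuming $\lambda \leq 42$ in the K3 case (resp.\ $\lambda \leq 20$ in the Enriques case), Lemma~\ref{deg} applied with $r = 2$ gives $h^0(X, mK_X) \geq 2$ for every $m > 2\lambda + 1$ such that $|mK_F| \neq \emptyset$, namely $m \geq 86$ for K3 (where $K_F = 0$ so $|mK_F|$ is non-empty for all $m \geq 1$) and $m$ even with $m \geq 42$ for Enriques (where $2K_F = 0$). Lemma~\ref{cbf} then promotes $h^0(X, mK_X) \geq 2$ to the statement that $|mK_X|$ defines the Iitaka fibration, using $b = 1$ for K3 and $b = 2$ for Enriques.

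To establish the numerical bound on $\lambda$, I would argue by contradiction and assume $\lambda$ strictly exceeds the claimed value. Since $C \cong \Pp^1$ and, for every positive integer $m$ divisible by $b$ with $m < \lambda$, the line bundle $\Oo_C(\rd{mA})$ has degree at most zero, Lemma~\ref{cbf} forces $h^0(X, mK_X) \leq 1$ for all such $m$. For $m \geq 2$, Theorem~\ref{ko}(2) gives $H^i(C, R^j f\ts \Oo_X(mK_X)) = 0$ for $i > 0$, so $h^j(X, mK_X) = h^0(C, R^j f\ts \Oo_X(mK_X))$. Combining Remark~\ref{lfree} with $H^1(F, mK_F) = 0$ gives $R^1 f\ts \Oo_X(mK_X) = 0$, and Corollary~\ref{rk} together with the projection formula computes $R^2 f\ts \Oo_X(mK_X)$ explicitly and shows it has no global sections when $m < \lambda + 1$. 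Serre duality gives $h^3(X, mK_X) = 0$. Hence $\chi(X, mK_X) = h^0(X, mK_X) \in \{0, 1\}$ throughout the relevant range of $m$.

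Substituting into Reid's plurigenus formula from Section~\ref{srr},
\[ \chi(mK_X) = (1 - 2m)\chi(\Oo_X) + l(m), \]
one obtains $l(m) \in \{(2m-1)\chi(\Oo_X),\ 1 + (2m-1)\chi(\Oo_X)\}$ for a long range of integers $m$. Since $l(m) = \sum_{P \in \B(X)} \sum_{j=1}^{m-1} \frac{\overline{jb_P}(r_P - \overline{jb_P})}{2r_P}$ depends only on the basket $\B(X) = \{(r_P, b_P)\}$, and since $\chi(\Oo_X) \in \{0, 1, 2\}$ in the K3 case while $\chi(\Oo_X) = 1$ in the Enriques case, these constraints leave only finitely many possible baskets. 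These are enumerated by a computer search; for each candidate basket one also controls the moduli part $M$ through the Fujino--Mori bound $\phi(n_P) \leq 21$ on its local indices, computes $\deg A$ from the canonical bundle formula, and verifies $\deg A \geq 1/42$ (resp.\ $\geq 1/20$), contradicting the standing assumption on $\lambda$.

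The principal obstacle is the combinatorial enumeration of baskets consistent with the imposed values of $l(m)$, which quickly grows unwieldy without computer assistance. A subtler point is the analysis at the boundary $m$ near $\lambda$, where $R^2 f\ts \Oo_X(mK_X)$ may acquire sections and add a small correction to the relation $\chi(X, mK_X) = h^0$; this must be tracked carefully in order to produce the sharp bounds $42$ and $20$ rather than looser estimates.
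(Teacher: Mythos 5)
Your reduction of the proposition to the bound on $\lambda$ (Lemma \ref{deg} with $r=2$, then Lemma \ref{cbf} with $b=1$ or $b=2$) matches the paper, and the general framework --- vanishing of $h^1$ and $h^3$, the plurigenus formula, a computer enumeration of finitely many baskets --- is the right one. But the core of your argument has a genuine gap: you never derive the identity that links the basket to $\lambda$. The paper restricts the sequence $0\to\Oo_X(K_X)\to\Oo_X(K_X+F)\to\Oo_F(K_F)\to0$ and singular Riemann--Roch to get
\[
\chi(\Oo_F)=\frac{1}{12}F\cdot c_2(X)=\frac{\lambda}{12}K_X\cdot c_2(X),
\qquad\text{i.e.}\qquad
\frac{12}{\lambda}\chi(\Oo_F)=-24\chi(\Oo_X)+\sum_{P\in\B(X)}\Bigl(r_P-\frac{1}{r_P}\Bigr).
\]
Since $\chi(\Oo_F)>0$, the hypothesis $\lambda>N$ traps $\sum(r_P-1/r_P)$ in the window between $24\chi(\Oo_X)$ and $24\chi(\Oo_X)+\tfrac{12}{N}\chi(\Oo_F)$; this is what bounds $r_P$ and the number of non-Gorenstein points (hence gives finiteness of baskets), and it is this same identity that converts each surviving basket into a value of $\lambda$. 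Your substitute step --- ``for each candidate basket one \dots computes $\deg A$ from the canonical bundle formula'' using only $\phi(n_P)\le21$ --- does not work: the moduli part $M$ and the boundary $B$ live on $C$ and are not determined by the basket of terminal singularities of $X$, so $\deg A=1/\lambda$ cannot be read off this way.

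A second problem is that your finiteness mechanism rests on the upper bound $\chi(mK_X)\le1$, which requires $h^2(X,mK_X)=h^0(C,R^2f_*\Oo_X(mK_X))$ to vanish. Your justification (Corollary \ref{rk} plus the projection formula) fails because $mK_X=K_X+f^{\ast}((m-1)A)$ with $(m-1)A$ only a $\Q$-divisor, so the projection formula does not identify $R^2f_*\Oo_X(mK_X)$ with $\omega_C\otimes\Oo_C((m-1)A)$, and nothing in the paper controls the degree of this line bundle. The paper sidesteps this entirely: it uses only $\chi(mK_X)=h^0+h^2\ge0$, which requires no information about $h^2$, and gets finiteness from the window above rather than from an upper bound on $\chi$. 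To repair your argument, insert the fiber-restriction computation of $K_X\cdot c_2(X)$ and drop the unproved vanishing of $h^2$; with those changes your search becomes the paper's.
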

\begin{proof}
	Let $\lambda=1/\deg A$. Assume that $\lambda>N$ for some integer $N$, then Lemma \ref{chi} implies 
	\[24\chi(\Oo_X)<\sum_{P\in\B(X)}\left(r_P-\frac{1}{r_P}\right)<24\chi(\Oo_X)+\frac{12}{N}\chi(\Oo_F).\]
	This tells us that $r_P\leq 24\chi(\Oo_X)+\frac{12}{N}\chi(\Oo_F)$ for all $P$ and there is at most
	\[\frac{2}{3}\left(24\chi(\Oo_X)+\frac{12}{N}\chi(\Oo_F)\right)\] non-Gorenstein points on $X$
	since $r-\frac{1}{r}\geq\frac{3}{2}$ for all integer $r>1$. 
	Note that we assume that $0\leq\chi(\Oo_X)\leq2$, hence there are only finitely many possible basket data.
	By Lemma \ref{chi} a basket data corresponds to a unique $\lambda$ once $\chi(\Oo_X)$ is fixed.
	This tells us that $\lambda$ has an upper bound.\par
	Note that the basket data satisfies more conditions.
	We have \[h^1(X,mK_X)=0\] since $h^1(C,f\ts(mK_X))=0$ by Theorem \ref{ko} (2) and $R^1f\ts(mK_X)$ is a zero sheaf because $h^1(F,mK_F)=0$.
	Also \[h^3(X,mK_X)=h^0(X,(1-m)K_X)=0\] for all $m>1$ since $K_X$ is pseudo-effective and not numerically trivial. Thus we have
	$\chi(mK_X)\geq0$ and so equation (\ref{e2}) in Section \ref{srr} yields that \begin{equation}\label{e3}
	(1-2m)\chi(\Oo_X)+l(m)\geq 0,\end{equation}
	where \[l(m)=\sum_{P\in\B(X)}\sum_{j=1}^{m-1}\frac{\overline{jb_P}(r_P-\overline{jb_P})}{2r_P},\]
	for all $m>1$.
	As we have seen before, for a fixed integer $N$ and assuming $\lambda>N$, there are only finitely many possible basket data of $X$.
	Using a computer, one can write down all possible basket data and check whether such basket satisfies (\ref{e3}) or not.
	In the K3 fibration case if one take $N=43$, then there is no basket data satisfying (\ref{e3}) for all $m>1$,
	hence $\lambda\leq42$.
	In the Enriques fibration case using the same technique one can prove that $\lambda\leq 20$.\par
	Finally the boundedness of the Iitaka fibration follows from Lemma \ref{deg} and Lemma \ref{cbf}.
\end{proof}
\begin{rk}
	We remark that the worst possible basket data occurs when $X\rightarrow C$ is a K3 fibration, $\chi(\Oo_X)=2$ and the basket data is
	\[ \{(2,1)\times8,(3,1)\times6,(7,1),(7,2),(7,3)\}\]
	with $\lambda=42$. This kind of fibration exists. See Example \ref{e42}.
\end{rk}
\begin{rk}
	The algorithm for the program in the proof of Proposition \ref{k3} will be given in the appendix.
\end{rk}
\section{Abelian or bielliptic fibrations}\label{sabl}
Let $X$ be a terminal minimal threefold of Kodaira dimension one and assume that $f\colon X\rightarrow C\cong\Pp^1$ is the Iitaka fibration.
Let $F$ be a general fiber of $X\rightarrow C$ and assume that $F$ is either an abelian surface or a bielliptic surface.
We write $K_X=f\st A=f\st(K_C+M+B)$ as in Theorem \ref{fmcbf} and we are going to estimate $\deg A$. 
Recall that for all $P\in C$, let $n_P$ be the smallest integer such that $n_PM$ is Cartier at $P$. Then
$\phi(n_P)\leq \dim_{\Cc}H^2_{prim}(\tl{F},\Cc)=5$. Here $\tl{F}$ is the canonical cover of $F$, which is an abelian surface.
One conclude that $n_P\leq 12$.

\subsection{Abelian or bielliptic fibrations with irregularity greater than one}
\begin{pro}\label{qqq}
	Assume that $h^1(X,\Oo_X)>1$, then $F$ is an abelian surface and $X\rightarrow C$ is isotrivial.
	We have $\deg A\geq\frac{1}{42}$.
\end{pro}
To prove the proposition we need the following criterion of isotriviality due to Campana and Peternell.
\begin{defn} Let $W$ be a smooth threefold of Kodaira dimension one and let $\omega$ be a two-form on $W$.
	Let $g\colon W\rightarrow C$ be the Iitaka fibration.
	We say $\omega$ is a vertical two-form with respect to $g$ if $\omega$ corresponds to an element $s$ such that
	\[ s\in H^0(W,T_{W/C}\otimes K_W)\subset H^0(W,T_W\otimes K_W)\cong H^0(W,\Omega^2_W).\]
	Here $T_W$ denotes the tangent bundle of $W$ and $T_{W/C}$ is the kernel of $T_W\rightarrow g\st T_C$.
\end{defn}
\begin{thm}[\cite{cp}, Theorem 4.2]\label{cp}
	Let $W$ be a smooth projective threefold of Kodaira dimension one and let $\omega$ be a two-form on $W$. Assume that $\omega$ is not
	vertical with respect to the Iitaka fibration. Let $X$ be a minimal model with Iitaka fibration $f\colon X\rightarrow C$.
	Then there is a finite base change $\tl{C}\rightarrow C$ with induced fiber space $\tl{f}\colon \tl{X}\rightarrow \tl{C}$
	such that $\tl{X}\cong F\times \tl{C}$, where $F$ is abelian or K3.
\end{thm}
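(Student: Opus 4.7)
The plan is to exploit the non-vertical 2-form $\omega$ to force the general fibers of the Iitaka fibration to be of a restricted type and then show that, after a finite base change, the family becomes a product.

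Under the isomorphism $\Omega^2_W \cong T_W \otimes K_W$ on the smooth threefold $W$, the relative tangent sequence tensored with $K_W$ produces the quotient $T_W \otimes K_W \twoheadrightarrow g^* T_C \otimes K_W = K_{W/C}$, and the non-vertical hypothesis on $\omega$ translates to the statement that the induced section $\bar\omega \in H^0(W, K_{W/C})$ is non-zero. Restricting $\bar\omega$ to a general smooth fiber $F$ produces a non-zero section of $K_F$. Since $F$ has Kodaira dimension zero as a fiber of the Iitaka fibration of a threefold of Kodaira dimension one, the Enriques-Kodaira classification applied to its minimal model, combined with the constraint $h^0(K_F) \geq 1$, forces $F$ to be K3 or abelian, since Enriques and bielliptic surfaces both have $h^0(K) = 0$. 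This recovers the classification of $F$ in the statement.

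Next I would show that $g$ is isotrivial on the smooth locus $C^\circ \subset C$. Because $K_F$ is trivial for K3 and abelian $F$, $\bar\omega$ gives a nowhere-vanishing section of the line bundle $\mathcal{H}^{2,0} = g_* K_{W/C}|_{C^\circ}$, i.e.\ the $(2,0)$-part of the Hodge bundle on $R^2 g_* \mathbb{C}|_{C^\circ}$. A standard computation shows that sections of $\mathcal{H}^{2,0}$ coming from holomorphic 2-forms on the total space are flat with respect to the Gauss-Manin connection, so the $(2,0)$-line varies flatly. In the K3 case this suffices: the $(2,0)$-line determines the entire Hodge filtration on $H^2(F, \mathbb{C})$, and the strong global Torelli theorem then yields isotriviality. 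In the abelian case, flatness of the induced class in $H^2 = \wedge^2 H^1$ combined with an analysis of the relative Albanese of $g$ forces the polarised Hodge structure on $H^1$ to be constant, whence all abelian fibers are isomorphic.

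Once $g^\circ : W^\circ \to C^\circ$ is isotrivial with fiber $F$, a finite étale cover $\tilde C^\circ \to C^\circ$ killing the monodromy on $H^*(F, \mathbb{Z})$ and the polarised automorphism group trivialises the smooth family to $F \times \tilde C^\circ$. Compactifying to a smooth curve $\tilde C$ and taking the relative minimal model of the base-changed threefold produces $\tilde f : \tilde X \to \tilde C$. The main obstacle I anticipate is extending the product structure across the boundary $\tilde C \setminus \tilde C^\circ$: the smooth part is trivial by isotriviality, but the relative minimal model over $\tilde C$ could a priori introduce non-trivial degenerate fibers. For K3 fibers this is handled by Kulikov's theorem on semistable degenerations together with the fact that the base-changed relative 2-form is nowhere-vanishing; for abelian fibers one appeals to the theory of semi-abelian degenerations, with $\omega$ again providing the rigidifying datum that forces the family to be a genuine trivial product.
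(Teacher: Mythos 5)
The paper itself offers no proof of this statement---it is quoted verbatim from Campana--Peternell \cite{cp}, Theorem 4.2---so your attempt can only be measured against the intended argument there. Your first two steps are sound and are essentially the standard route: non-verticality of $\omega$ produces a non-zero section of $\omega_{W/C}=K_W\otimes g\st T_C$, whose restriction to a general fibre is a non-zero section of $K_F$, so $p_g(F)\geq1$ and among minimal surfaces with $K\equiv0$ only K3 and abelian survive; and since $\omega$ is closed, $t\mapsto[\omega|_{F_t}]$ is a flat section of $R^2g\ts\Cc$ lying in $F^2$, so the period map is constant and the family over the smooth locus $C^\circ$ is isotrivial by (local) Torelli.

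The genuine gaps are in the passage from isotriviality over $C^\circ$ to the global product structure of the \emph{induced} fibre space over all of $\tl{C}$. First, in the abelian case, ``killing the monodromy and the polarised automorphism group'' by an \'etale cover is not sufficient, because $\mathrm{Aut}(F)$ is positive-dimensional: an isotrivial abelian fibration is a priori only a torsor under a constant abelian scheme, and you must separately argue that this torsor dies after a finite base change (e.g.\ because the Weil--Ch\^atelet group $H^1(\Cc(C),F)$ is a torsion Galois-cohomology group, or by producing a multisection); nothing in your step 3 addresses the translation part. Second, at the boundary the appeal to Kulikov's theorem and semi-abelian degenerations is not the right tool and does not close the argument: those results classify what degenerate fibres can look like, they do not show there are none. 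What actually works is that constancy of the period map forces the local monodromy at each point of $C\setminus C^\circ$ to be of finite order, so after a further finite base change ramified at those points the monodromy becomes trivial and the family extends to a smooth family with constant period across all of $\tl{C}$, which is then the product. Finally, note that the statement (and the way the paper uses it, namely $X\cong\tl{X}/G$ with $\tl{X}=F\times\tl{C}$ finite over $X$ and \'etale in codimension one) concerns the fibre space induced from the minimal model $X$ itself; by passing to a resolution and a relative minimal model you would only obtain a birational version of the conclusion, which is strictly weaker than what is claimed and used.
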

\begin{proof}[Proof of Proposition \ref{qqq}]
	By \cite[Theorem 1.6]{f2}, we have $h^1(X,\Oo_X)\leq h^1(F,\Oo_F)\leq 2$.
	Thus we have $h^1(X,\Oo_X)=2$ and $F$ should be an abelian surface. Let $a\colon X\rightarrow Alb(X)$
	be the Albanese map of $X$. We will prove that $a(F)$ is two-dimensional. \par
	Assume that $a(F)$ is a point, then $a(X)$ is a curve of genus $\geq2$. However this induces a morphism $C\rightarrow a(X)$,
	which is impossible. If $a(F)$ is one-dimensional. Then $a(F)$ should be an elliptic curve since otherwise there is a holomorphic map
	\[ F\rightarrow a(F)\rightarrow Jac(a(F))\]
	and the image should be a (translation of) non-trivial sub-complex torus, which is impossible.
	Since there are only countably many elliptic curves up to translation contained in a fixed abelian variety,
	we have $a(F)\cong E$ for some one-dimensional abelian subvariety $E$ of $Alb(X)$,
	for general $F$. By \cite[Theorem 5.3.5]{bl}, there is an isogeny $Alb(X)\rightarrow E\times T$ for some elliptic curve $T$.
	Consider the morphism \[X\rightarrow Alb(X)\rightarrow E\times T\rightarrow T\] which contracts the general fiber of $X\rightarrow C$.
	Thus there is an induced morphism $C\rightarrow T$. But $C\cong\Pp^1$ and $T$ is an elliptic curve. It leads a contradiction.
	In conclusion, we can prove that $a(F)$ is two-dimensional.\par
	Now we know that $a|_F\colon F\rightarrow Alb(X)$ is surjective. For any smooth model $W\rightarrow X$ we know that the pull-back
	of the global two-form of $Alb(X)$ on $W$ is a non-vertical two-form, hence $X\rightarrow C$ is isotrivial by Theorem \ref{cp}.
	We have to estimate $\deg A$. Consider the following diagram
	\[\xymatrix{F\times \tl{C} \ar[r]^-{\sim} & \tl{X}\ar[d]_{\tl{f}}\ar[r]^{\tl{\psi}} & X\ar[d]^f \\ &\tl{C}\ar[r]^{\psi} & C}.\]
	After replacing $\tl{C}$ by its Galois closure over $C$ we may assume that $\tl{C}\rightarrow C$ is Galois.
	Let $G=Gal(\tl{C}/C)$. Note that there is a natural $G$-action on $\tl{X}$
	such that $X\cong \tl{X}/G$. We have the induced action on $F$. 
	Since finite automorphism groups of an abelian surface are discrete, $G$ acts on
	$\tl{X}\cong F\times \tl{C}$ diagonally. We may assume further that $G$ acts on $F$ faithfully since the kernel of $G$ acting
	on $F$ is a normal subgroup of $G$ and one can replace $\tl{C}$ by $\tl{C}$ module the kernel.
	In particular $\tl{X}\rightarrow X$ is \'{e}tale in codimension one, hence $K_{\tl{X}}={\tl{\psi}}\st K_X$.\par	
	Let $B_0=\sum_{P\in C}(1-\frac{1}{u_P})P$. It is easy to see that $K_{\tl{C}}=\psi\st(K_C+B_0)$. Hence
	$K_{\tl{X}}={\tl{\psi}}\st f\st(K_C+B_0)$ which implies that $K_X=f\st(K_C+B_0)$. Since $B_0\leq B$ we have $M=0$ and
	$B=B_0=\sum_{P\in C}(1-\frac{1}{u_P})P$.\par
	Finally note that $\tl{C}$ is of general type and it is well-known that $\deg(K_C+B_0)\geq\frac{1}{42}$
	(by a simple calculation or using the fact that $|Aut(\tl{C})|\leq 84(g(\tl{C})-1)$). Hence $\deg A\geq \frac{1}{42}$.
\end{proof}
\subsection{Abelian or bielliptic fibrations with irregularity equal to one}
Assume that $h^1(X,\Oo_X)=1$.
Let $a\colon X\rightarrow E$ be the Albanese map of $X$. Let $D$ be a general fiber of $X\rightarrow E$.
Note that the induced morphism $f|_D\colon D\rightarrow C$
is surjective, since otherwise the Iitaka fibration $X\rightarrow C$ factors through $X\rightarrow E$, which is impossible.
Since $K_D=K_X|_D$, we know that $K_D$ is nef, $K_D\not\equiv0$ and $K^2_D=0$. Thus $D$ is a minimal surface of Kodaira dimension one and
the Iitaka fibration of $D$ factors through $D\rightarrow C$.
Consider the following diagram
\[\xymatrix{ D\ar[d]_{f_D}\ar[r]^{\iota} & X \ar[r]^a\ar[d]^f & E \\ C_0 \ar[r]^{\phi} & C & }.\]
where $f_D\colon D\rightarrow C_0$ is the Iitaka fibration of $D$. There is a $\Q$-divisor $A_D=K_{C_0}+M_D+B_D$ such that $f_D\st A_D=K_D$.\par
Recall that we write $K_X=f\st A=f\st(K_C+M+B)$. The $\Q$-divisor $A$ (resp. $A_D$) depends only on the embedding
$f\ts\Oo_X(mK_X)\hookrightarrow K(C)$ (resp. ${f_D}\ts\Oo_D(mK_D)\hookrightarrow K(C_0)$) for some fixed sufficiently divisible integer $m$. 
One may first fix an embedding ${f_D}\ts\Oo_D(mK_D)\hookrightarrow K(C_0)$ and then choose $f\ts\Oo_X(mK_X)\hookrightarrow K(C)$
via the canonical map
\[ f\ts\Oo_X(mK_X)\rightarrow f\ts\iota\ts\iota\st\Oo_X(mK_X)=f\ts\iota\ts\Oo_D(mK_D)=\phi\ts{f_D}\ts\Oo_D(mK_D)\rightarrow\phi\ts K(C_0)\]
(Note that the image of the above map is contained in $K(C)$). In conclusion, we may assume that $\phi\st A=A_D$.\par
\begin{lem}\label{lmb}
	For any $P'\in C_0$, let $m_{P'}$ be the multiplicity of the fibration $D\rightarrow C_0$ over $P'$. Let 
	\[ B_0=\sum_{P\in C}(1-\frac{1}{u_P})P,\quad B'_0=\sum_{P'\in C_0}(1-\frac{1}{m_{P'}})P',\] here $u_P$ is defined in Theorem \ref{fmcbf}.
	Then we have $\phi\st(K_C+B_0)=K_{C_0}+B'_0$.
\end{lem}
\begin{proof}
	Fix $P\in C$ and let $F_0$ be a reduced and irreducible component of $f^{-1}(P)$ such that $F_0\rightarrow E$ is surjective.
	Let $F_0\rightarrow E_0\rightarrow E$ be the Stein factorization of $F_0\rightarrow E$. For general $P$ we know that $F_0$ is either an
	abelian surface or a bielliptic surface, which implies $F_0\rightarrow E_0$ is smooth and $E_0\rightarrow E$ is \'{e}tale, hence
	$F_0\rightarrow E$ is smooth. Thus for a general point $Q\in E$ we may assume that $Q$ belongs to the smooth locus of $F_0\rightarrow E$,
	for every possible choice of $F_0$. In conclusion, we may assume that $D|_{F_0}=F_0|_D$ is reduced.\par
	This implies that if we fix $P\in C$ and write \[\phi\st P=\sum_ie_{P'_i}P'_i,\quad f_D\st P'_i=m_{P'_i}(f|_D\st P'_i)_{red},\]
	then $e_{P'_i}m_{P'_i}=e_{P'_j}m_{P'_j}$ for all $i$, $j$ and this number equals to $u_P$.\par
	We write $R=\sum_{P'\in C_0}(e_{P'}-1)P'$. We have
	\begin{align*}
		\phi\st(K_C+B_0)=K_{C_0}-R+\phi\st B_0&=K_{C_0}+\sum_{P'\in C_0}\left(1-e_{P'}+e_{P'}(1-\frac{1}{e_{P'}m_{P'}})\right)P'\\
		&=K_{C_0}+\sum_{P'\in C_0}\left(1-\frac{1}{m_{P'}}\right)P'=K_{C_0}+B'_0.
	\end{align*} 
\end{proof}
\begin{cor}\label{mbeff}
	Notation as above. Assume that $C_0\cong\Pp^1$. Let $B_1=B-B_0$. Then we may assume that $M+B_1$ is effective.
\end{cor}
\begin{proof}
	Let $B'_1=B_D-B'_0$, then we have $\phi\st(M+B_1)=M_D+B'_1$ since $\phi\st(K_C+B_0)=K_{C_0}+B'_0$. Because $M_D+B'_1$ is integral
	(cf. Lemma \ref{efcbf}), we may assume that $M_D+B'_1$ is effective since $C_0\cong\Pp^1$. Thus $M+B_1$ is effective.
\end{proof}
Recall that we have a morphism $J\colon C_0\rightarrow \Pp^1$ such that $12M_D=J\st\Oo_{\Pp^1}(1)$ (cf. Lemma \ref{efcbf}).
\begin{lem}
	Notation as above. If $J\colon C_0\rightarrow\Pp^1$ is a constant map, then $\deg A\geq\frac{1}{42}$. 
	Moreover, assume that $X\rightarrow C$ is an abelian fibration, then $h^2(X,\Oo_X)\geq1$ and $h^3(X,\Oo_X)=0$.
\end{lem}
\begin{proof}
	Note that $J\colon C_0\rightarrow\Pp^1$ is a constant map implies $M_D=0$. Furthermore, $B'_1$ corresponds to the contribution of
	non-multiple fibers in Kodaira's canonical bundle formula. Since $J\colon C_0\rightarrow\Pp^1$ is a constant map, all the singular fibers
	of $D\rightarrow C_0$ are multiple fibers. Thus $B'_1=0$ and we have $A_D=K_{C_0}+B'_0$ as well as $A=K_C+B_0$.
	So \[\deg A=-2+\sum_{P\in C}\left(1-\frac{1}{u_P}\right).\] It is easy to see that $\deg A\geq \frac{1}{42}$.\par
	Notice that we have $\rd{M+B}=0$. Assume that $f\colon X\rightarrow C$ is an abelian fibration, then
	$f\ts\Oo_X(K_X)=\Oo_C(\rd{A})=\Oo_C(-2)$. Thus $h^2(X,\Oo_X)=h^1(X,K_X)\geq h^1(C,f\ts\Oo_X(K_X))=1$ and
	$h^3(X,\Oo_X)=h^0(X,K_X)=h^0(C,f\ts\Oo_X(K_X))=0$.
\end{proof}
\begin{lem}
	Notation as above. Assume that $J\colon C_0\rightarrow\Pp^1$ is non-constant, then $\deg A\geq\frac{1}{72}$.
	Moreover, if $X\rightarrow C$ is an abelian fibration and $h^3(X,\Oo_X)>0$, then $\deg A\geq\frac{1}{24}$.
\end{lem}
\begin{proof}
	Note that since $F$ is either an abelian surface or a bielliptic surface, all the curves on $F$ which are contracted by
	$a|_F\colon F\rightarrow E$ are isomorphic to each other, hence all the components of $D\cap F$ are isomorphic.
	It implies that $J\colon C_0\rightarrow \Pp^1$ factors through
	$C_0\rightarrow C$. Let $n=\deg C_0/C$ and $n'$ be the degree of $J\colon C_0\rightarrow\Pp^1$, then we have $n'\geq n$.\par
	Since $12M_D=J\st\Oo_{\Pp^1}(1)$, we have $\deg M_D=\frac{n'}{12}$. Since $\deg M_D\leq\deg(M_D+B'_1)=\chi(\Oo_D)$ by Lemma \ref{efcbf},
	We have $n\leq n'\leq 12\chi(\Oo_D)$. Thus
	\begin{align*}
	\deg A=\frac{1}{n}\deg A_D
	&\geq \frac{1}{n}(2g(C_0)-2+\chi(\Oo_D))\geq\frac{2g(C_0)-2+\chi(\Oo_D)}{12\chi(\Oo_D)}.
	\end{align*}
	If $g(C_0)\geq 1$ we have $\deg A\geq\frac{1}{12}$. Now we assume that $g(C_0)=0$.\par
	Since $0<\deg M_D\leq\chi(\Oo_D)$, we have $\chi(\Oo_D)\geq 1$. When $\chi(\Oo_D)\geq3$, one can see that $\deg A\geq\frac{1}{36}$.
	If $\chi(\Oo_D)=2$ then $n\leq 24$ and the fibration $D\rightarrow C_0$ has at least one multiple fiber, hence $\deg A_D\geq\frac{1}{2}$.
	We have $\deg A\geq\frac{1}{48}$. If $\chi(\Oo_D)=1$ then $n\leq 12$ and
	\[\deg A_D=-1+\sum_i\left(1-\frac{1}{m_i}\right)\geq\frac{1}{6}.\]
	One has $\deg A\geq\frac{1}{72}$.\par
	Finally we assume that $X\rightarrow C$ is an abelian fibration and $h^3(X,\Oo_X)>0$ and we are going to prove that $\deg A\geq\frac{1}{24}$.
	If $h^3(X,\Oo_X)=h^0(X,K_X)>1$ then \[\deg A\geq\deg\rd{A}=\deg f\ts\Oo_X(K_X)\geq 1.\] Thus we may assume that $h^3(X,\Oo_X)=1$.\par
	Note that we may still assume that $g(C_0)=0$ since otherwise $\deg A\geq\frac{1}{12}$ as the computation above.
	The condition $h^3(X,\Oo_X)=h^0(X,K_X)=1$ implies \[-2+\deg\rd{M+B_0+B_1}=\deg\rd{A}=\deg f\ts\Oo_X(K_X)=0.\]
	One has $\deg\rd{M+B_0+B_1}=2$. Since $\rd{B_0}=0$, we have $M+B_1$ is supported on at least two points.
	Corollary \ref{mbeff} says that $M+B_1$ is effective. The coefficients of $M+B_1$ are greater than or equal to
	$\frac{1}{12}$ by the canonical bundle formula (see the discussion at the beginning of this section), hence $\deg(M+B_1)\geq\frac{1}{6}$.
	We have \[ \chi(\Oo_D)=\deg(M_D+B'_1)=n\deg(M+B_1)\geq\frac{n}{6},\] hence $n\leq 6\chi(\Oo_D)$ and
	\[\deg A=\frac{1}{n}\deg A_D\geq\frac{-2+\chi(\Oo_D)}{6\chi(\Oo_D)}.\]
	Also notice that $M_D\neq0$ implies $h^1(D,\Oo_D)=0$ because general fibers of $D\rightarrow C_0$ are not isomorphic to each other.
	Since $h^0(X,K_X)>0$, we have $h^0(D,K_D)=h^2(D,\Oo_D)>0$, thus $\chi(\Oo_D)\geq 2$.
	If $\chi(\Oo_D)\geq3$, then $\deg A\geq\frac{1}{18}$. Assume that $\chi(\Oo_D)=2$, then $n\leq12$. Also in this case $B'_0\neq0$,
	hence $\deg A'=\deg B'_0\geq\frac{1}{2}$ and $\deg A=\frac{1}{n}\deg A'\geq\frac{1}{24}$.
\end{proof}
Combining the previous two lemmas, one can conclude that
\begin{pro}\label{irr}
	Assume that $h^1(X,\Oo_X)=1$, then $\deg A\geq\frac{1}{72}$.
	Moreover, if $X\rightarrow C$ is an abelian fibration and $h^3(X,\Oo_X)>0$, then $\deg A\geq\frac{1}{24}$.
\end{pro}

\subsection{General situation}
\begin{lem}\label{gorr}
	Assume that $\chi(\Oo_X)=0$, then $\deg A\geq\frac{1}{156}$.
	Furthermore, if $h^2(X,\Oo_X)>0$ then either $h^3(X,\Oo_X)>0$ and $\deg A\geq \frac{1}{24}$, or
	$\deg M=0$, $B=\sum_{P\in C}(1-\frac{1}{u_P})P$ and $\deg A\geq\frac{1}{42}$.
\end{lem}
\begin{proof}
	We have $h^1(X,\Oo_X)+h^3(X,\Oo_X)>0$.
	If $h^1(X,\Oo_X)\neq0$ then $\deg A\geq\frac{1}{72}$ by Proposition \ref{qqq} and Proposition \ref{irr}.
	Now assume that $h^1(X,\Oo_X)=0$ and so $h^3(X,\Oo_X)>0$. In this case $h^0(X,K_X)>0$, hence $h^0(F,K_F)>0$ and $F$ is an abelian surface.
	Recall that Lemma \ref{cbf} implies $h^0(X,K_X)=\deg\rd{A}+1$. If $h^0(X,K_X)>1$, then $\deg A\geq 1$.
	Now we assume that $h^0(X,K_X)=1$. In this case we have $\deg \rd{A}=0$, hence $\deg A=\deg\{A\}$, where $\{A\}$ denotes
	the fractional part of $A$. By the canonical bundle formula we have
	$\{A\}=\sum_{P\in C}d_PP$ where $d_P=\frac{a_Pu_P-v_P}{n_Pu_P}$ for some positive integers $n_P\leq 12$,
	$a_P$, $u_P$ and $v_P$ such that $v_P\leq n_P$. Fix $P$ such that $d_P\neq0$. If $u_P\leq n_P$, then $d_P\geq\frac{1}{n_P^2}$.
	If $u_P>n_P$, then \[d_P\geq \frac{u_P-n_P}{n_Pu_P}=\frac{1}{n_P}-\frac{1}{u_P}\geq \frac{1}{n_P}-\frac{1}{n_P+1}
		\geq\frac{1}{12}-\frac{1}{13}=\frac{1}{156}.\]\par
	Finally assume that $h^2(X,\Oo_X)>0$, then $h^1(X,\Oo_X)+h^3(X,\Oo_X)\geq 2$.
	We have either $h^1(X,\Oo_X)>1$ which implies $\deg M=0$, $B=\sum_{P\in C}(1-\frac{1}{u_P})P$ and
	$\deg A\geq\frac{1}{42}$ by Proposition \ref{qqq}, or
	$h^1(X,\Oo_X)=1$, $h^3(X,\Oo_X)>0$ and $\deg A\geq\frac{1}{24}$ by Proposition \ref{irr}.
\end{proof}
\begin{cor}\label{gor}
	Assume that $X$ is Gorenstein, then $\deg A\geq\frac{1}{156}$ and $|mK_X|$ defines the Iitaka fibration if
	$m\geq 314$ and $|mK_F|$ is non-empty.
\end{cor}
\begin{proof}
	Since $X$ is Gorenstein, we have $\chi(\Oo_X)=0$ by Lemma \ref{chi}. Lemma \ref{gorr} says that $\deg A\geq\frac{1}{156}$ and hence
	$|mK_X|$ defines the Iitaka fibration if $m\geq 314$ and if $|mK_F|$ is non-empty by Lemma \ref{deg}.
\end{proof}

Let $\mu\colon \tl{X}\rightarrow X$ be the canonical cover (cf. Lemma \ref{cv}). We know that $\tl{X}$ is minimal of Kodaira dimension one.
Let $\tl{f}\colon \tl{X}\rightarrow \tl{C}$ be the Iitaka fibration of $\tl{X}$, and let $\nu\colon \tl{C}\rightarrow C$ be the induced morphism.
One may write $K_{\tl{X}}=\tl{f}\st\tl{A}=\tl{f}\st(K_{\tl{C}}+\tl{M}+\tl{B})$. Then $\tl{A}=\nu\st A$ and $\tl{M}=\nu\st M$. 

\begin{lem}\label{chi1}
	Assume that $\chi(\Oo_X)\leq 1$, then the global Cartier index of $K_X$ belongs to the following set:
	\[\{1,2,3,4,5,6,8,10,12\}.\]
\end{lem}
\begin{proof}
	By Lemma \ref{chi} we have $24\chi(\Oo_X)=\sum_{P\in\B(X)}\left(r_P-\frac{1}{r_P}\right)$.
	If $\chi(\Oo_X)=0$ then $X$ is Gorenstein. Assume that $\chi(\Oo_X)=1$. One can check that $\{r_P\}$ is one of the following set:
	\begin{enumerate}
	\item $\{r_{P_1}=...=r_{P_{16}}=2\}$.
	\item $\{r_{P_1}=...=r_{P_6}=2;r_{P_7}=...=r_{P_{10}}=4\}$.
	\item $\{r_{P_1}=...=r_{P_5}=2;r_{P_6}=...=r_{P_9}=3;r_{P_{10}}=6\}$.
	\item $\{r_{P_1}=...=r_{P_3}=2;r_{P_4}=4;r_{P_5}=r_{P_6}=8\}$.
	\item $\{r_{P_1}=...=r_{P_3}=2;r_{P_4}=r_{P_5}=5;r_{P_6}=10\}$.
	\item $\{r_{P_1}=r_{P_2}=2;r_{P_3}=r_{P_4}=3;r_{P_5}=4;r_{P_6}=12\}$.
	\item $\{r_{P_1}=...=r_{P_9}=3\}$.
	\item $\{r_{P_1}=...=r_{P_5}=5\}$.
	\end{enumerate}
\end{proof}
\begin{lem}\label{bie}
	Assume that $\tl{X}\rightarrow \tl{C}$ is a bielliptic fibration , then $\deg A\geq\frac{1}{864}$.
\end{lem}
\begin{proof}
	We know that $h^3(\tl{X},\Oo_{\tl{X}})=h^0(\tl{X},K_{\tl{X}})=0$ and $h^1(\tl{X},\Oo_{\tl{X}})\leq 1$. Since $\chi(\Oo_{\tl{X}})=0$,
	$h^1(\tl{X},\Oo_{\tl{X}})=1$ and $h^2(\tl{X},\Oo_{\tl{X}})=0$. Hence $h^2(X,\Oo_X)=0$ and so $\chi(\Oo_X)\leq 1$.
	Lemma \ref{chi1} says that the global Cartier index of $K_X$ is less than or equal to $12$, hence $\deg \tl{C}/C\leq 12$.
	Proposition \ref{irr} says that $\deg\tl{A}\geq \frac{1}{72}$. Since $\tl{A}=\nu\st A$,
	$\deg A\geq \frac{1}{72\cdot 12}=\frac{1}{864}$.
\end{proof}
From now on we may assume that $\tl{X}\rightarrow \tl{C}$ is an abelian fibration.
Recall that for any $P\in C$, we define $u_P$ be the smallest integer satisfying the following condition:
$\lambda f\st P$ is integral if and only if $\lambda\in\frac{1}{u_P}\Z$.
\begin{lem}
	For any $P\in C$ there is an integer $w_P$ such that $u_P$ divides $w_P$ and for any $Q\in\nu^{-1}(P)\subset\tl{C}$
	we have $e_Qu_Q=w_P$, where $e_Q$ is the ramification index of $Q$ with respect to $\nu\colon \tl{C}\rightarrow C$.
	Moreover, we have $\sum_{P\in C}(1-\frac{1}{w_P})P\leq B$.
\end{lem}
\begin{proof}
	Let $D=f^{-1}(P)$ and let $\mu^{-1}(D)=\bigcup_i D_i$ be the composition of connected components. Let $Q_i=\tl{f}(D_i)$, then
	$\{Q_i\}$ are all distinct and $\nu^{-1}(P)=\{Q_i\}$. Since there is a cyclic action on $\tl{X}$ permutes $D_i$, one may write
	$(\mu\circ f)\st P=\mu\st(u_PD)=\sum_i\lambda u_PD_i$ for some integer $\lambda$ which is independent of $i$. 
	We define $w_P=\lambda u_P$.\par	
	Note that $(\tl{f}\circ\nu)\st P=\tl{f}\st(\sum_i e_{Q_i}Q_i)=\sum_ie_{Q_i}u_{Q_i}D_i$, hence $e_{Q_i}u_{Q_i}=w_P$.
	Consider \begin{align*}
		K_{\tl{C}}+\sum_{Q\in\tl{C}}\left(1-\frac{1}{u_Q}\right)Q
			&=\nu\st K_C+\sum_{Q\in\tl{C}}(e_Q-1)Q+\sum_{Q\in\tl{C}}\left(1-\frac{1}{u_Q}\right)Q\\
		&=\nu\st K_C+\sum_{Q\in\tl{C}}e_Q\left(1-\frac{1}{w_P}\right) Q=\nu\st(K_C+\sum_{P\in C}\left(1-\frac{1}{w_P}\right)P).
	\end{align*}
	Note that $\sum_{Q\in\tl{C}}(1-\frac{1}{u_Q})Q\leq \tl{B}$ and $K_{\tl{C}}+\tl{B}=\nu\st(K_C+B)$ because of $\nu\st M=\tl{M}$.
	Hence $\sum_{P\in C}(1-\frac{1}{w_P})P\leq B$
\end{proof}
We define $B_0=\sum_{P\in C}(1-\frac{1}{w_P})P$ and $\tl{B}_0=\sum_{Q\in\tl{C}}(1-\frac{1}{u_Q})Q$. We have
$\nu\st(K_C+B_0)=K_{\tl{C}}+\tl{B}_0$. We define $B_1=B-B_0$ and $\tl{B}_1=\tl{B}-\tl{B_0}$, so that we also have $\nu\st B_1=\tl{B}_1$.
Recall that for $P\in C$ we define $n_P$ to be the smallest integer such that $n_PM$ is integral at $P$. 
\begin{lem}\label{b1}
	Assume that $\tl{X}\rightarrow \tl{C}$ is an abelian fibration.
	If $B_1=\sum_{P\in C}s'_PP$, then $s'_P\in\frac{1}{n_Pw_P}\Z_{\geq0}$.
\end{lem}
\begin{proof}
	For any $Q\in \nu^{-1}(P)$, we have $\mbox{coeff}_Q\tl{B}_1\in\frac{1}{n_Qu_Q}\Z_{\geq0}$. Since $\nu\st M=\tl{M}$,
	$n_Q$ divides $n_P$. Now $\nu\st B_1=\tl{B}_1$ implies $e_Qs'_P\in\frac{1}{n_Pu_Q}\Z_{\geq0}$,
	or $s'_P\in\frac{1}{n_Pu_Qe_Q}\Z_{\geq0}=\frac{1}{n_Pw_P}\Z_{\geq0}$.
\end{proof}
\begin{lem}\label{b0big}
	Assume that $\deg(K_C+B_0)\geq0$, then $\deg A\geq \frac{1}{120}$.
\end{lem}
\begin{proof}
	If $\deg(K_C+B_0)>0$, then $\deg(K_C+B_0)\geq\frac{1}{42}$. Hence \[\deg A=\deg(K_C+B+M)\geq\deg(K_C+B_0)\geq\frac{1}{42}.\]
	Assume that $\deg(K_C+B_0)=0$. First we assume that $B_1\neq0$. Note that $B_0$ is of one of the following form:
	\begin{enumerate}
	\item $\sum_{i=1}^4\frac{1}{2}P_i$.
	\item $\frac{1}{2}P_1+\frac{2}{3}P_2+\frac{5}{6}P_3$.
	\item $\frac{1}{2}P_1+\frac{3}{4}P_2+\frac{3}{4}P_3$.
	\item $\sum_{i=1}^3\frac{1}{3}P_i$.
	\end{enumerate}
	Hence $w_P\leq 6$ for all $P$. We have $n_P\leq 12$ for $\phi(n_P)\leq H^2_{prim}(F)=5$, thus
	\[\deg A=\deg(K_C+B_0+B_1+M)\geq\deg B_1\geq\frac{1}{72}\] by Lemma \ref{b1}.\par
	Finally assume that $B_1=0$. Then $\deg M>0$. Since $120M$ is integral, $\deg A=\deg M\geq\frac{1}{120}$.
\end{proof}
From now on we may assume that $\deg(K_C+B_0)<0$. There are only three possibilities of $B_0$:
\begin{enumerate}
	\item $B_0=\frac{m-1}{m}P+\frac{m'-1}{m'}P'$.
	\item $B_0=\frac{1}{2}P+\frac{1}{2}P'+\frac{m-1}{m}P''$.
	\item $B_0=\frac{1}{2}P+\frac{2}{3}P'+\frac{m-1}{m}P''$, $3\leq m\leq 5$.
\end{enumerate}
Note that $\deg(K_C+B_0)<0$ implies $\deg(K_{\tl{C}}+\tl{B}_0)<0$, hence $\tl{C}\cong\Pp^1$.
\begin{lem}\label{ty1}
	Assume that $B_0=\frac{m-1}{m}P+\frac{m'-1}{m'}P'$ and assume that $h^3(\tl{X},\Oo_{\tl{X}})\neq0$. Let
	$n=\deg \tl{C}/C$, then $\deg A\geq\frac{1}{60}-\frac{2}{n}$.
\end{lem}
\begin{proof}
	Note that $\deg(K_{\tl{C}}+\tl{B}_0)=n\deg(K_C+B_0)=-n(\frac{1}{m}+\frac{1}{m'})$. Since $\deg(K_{\tl{C}}+\tl{B}_0)\geq -2$,
	we have $\frac{1}{m}+\frac{1}{m'}\leq \frac{2}{n}$.\par
	\begin{labeling}{aa}
	\item[\textbf{Case 1}:] $Supp(B_1)\not\subset\{P,P'\}$. In this case there exists $P_1\in Supp(B_1)$ and $w_{P_1}=1$.
		Hence $\deg B_1\geq\frac{1}{12}$
		by Lemma \ref{b1}. So \[ \deg A\geq \deg(K_C+B_0+B_1)\geq \frac{1}{12}-\frac{1}{m}-\frac{1}{m'}\geq\frac{1}{12}-\frac{2}{n}.\]
	\item[\textbf{Case 2}:] $Supp(B_1)\subset\{P,P'\}$. In this case $Supp(B)=\{P,P'\}$, so $\deg B<2$.
		Since $\deg A=-2+\deg{M+B}>0$, we have $\deg M>0$. Consider the following two situations.
		\begin{labeling}{bb}
		\item[\textbf{Case 2-1}:] $Supp(\{M+B_1\})\subset\{P,P'\}$. We have $\#Supp(\{M\})\leq 2$ and so $\deg M\geq\frac{1}{60}$. Thus 
			\[ \deg A\geq \deg (K_C+B_0+M)\geq \frac{1}{60}-\frac{1}{m}-\frac{1}{m'}\geq\frac{1}{60}-\frac{2}{n}.\]
		\item[\textbf{Case 2-2}:] $Supp(\{M+B_1\})\not\subset\{P,P'\}$. If $\deg\rd{M+B_1}\geq0$, then 
			$\deg M+B_1=\deg\rd{M+B_1}+\deg\{M+B_1\}\geq\frac{1}{12}$, hence
			\[ \deg A\geq \deg(M+B_1)-\frac{1}{m}-\frac{1}{m'}\geq \frac{1}{12}-\frac{1}{m}-\frac{1}{m'}\geq\frac{1}{12}-\frac{2}{n}.\]\par
			Assume that $\deg\rd{M+B_1}<0$. 
			Since $\deg(K_{\tl{C}}+\tl{B}_0)\leq0$, we have $\#Supp(\tl{B}_0)\leq 3$. Note that $h^3(\tl{X},\Oo_{\tl{X}})\neq0$ implies
			$\deg \rd{\tl{M}+\tl{B}}=\deg\rd{\tl{M}+\tl{B}_0+\tl{B}_1}\geq 2$, hence
			$\deg \rd{\tl{M}+\tl{B}_1}\geq 2-\#Supp(\tl{B}_0)$. Write $\{M+B_1\}=aP+a'P'+\sum_ib_iP_i$ and assume that $\deg\rd{M+B_1}=-d$.
			Our assumption implies $b_i\neq0$ for some $i$.
			We have \[\deg\rd{\tl{M}+\tl{B}_1}=\sum_i \rd{ae_{Q_i}}+\sum_j\rd{a'e_{Q'_j}}-nd\geq 2-\#Supp(\tl{B}_0),\]
			where $e_Q$ denotes the ramification index with respect to $\tl{C}\rightarrow C$. In other words,
			\[ a+a'\geq \frac{1}{n}\left(\sum_i \rd{ae_{Q_i}}+\sum_j\rd{a'e_{Q'_j}}\right)\geq d+\frac{2-\#Supp(\tl{B}_0)}{n}\]
			since $n=\sum_ie_{Q_i}=\sum_je_{Q'_j}$. This implies
			\[\deg(M+B_1)\geq \sum_ib_i+\frac{2-\#Supp(\tl{B}_0)}{n}\geq \frac{1}{12}+\frac{2-\#Supp(\tl{B}_0)}{n}.\]
			Note that if $\#Supp(\tl{B}_0)=3$, then $\deg(K_{\tl{C}}+\tl{B}_0)>-1$ and then $\frac{1}{m}+\frac{1}{m'}<\frac{1}{n}$.
			We conclude that \[\frac{2-\#Supp(\tl{B}_0)}{n}-\frac{1}{m}-\frac{1}{m'}\geq -\frac{2}{n},\] so
			\[ \deg A\geq \deg(M+B_1)-\frac{1}{m}-\frac{1}{m'}\geq \frac{1}{12}+\frac{2-\#Supp(\tl{B}_0)}{n}-\frac{1}{m}-\frac{1}{m'}
				\geq\frac{1}{12}-\frac{2}{n}.\]
		\end{labeling}
	\end{labeling}		
\end{proof}
\begin{lem}\label{ty2}
	Assume that $B_0=\frac{1}{2}P+\frac{1}{2}P'+\frac{m-1}{m}P''$ and assume that $h^3(\tl{X},\Oo_{\tl{X}})\neq0$. Let
	$n=\deg \tl{C}/C$. If $n\geq 6$ then $\deg A\geq\frac{1}{36}-\frac{2}{n}$.
\end{lem}
\begin{proof}
	We use the same argument as in the previous lemma, but this case are more complicated.
	We have \[-2\leq\deg(K_{\tl{C}}+\tl{B}_0)=n\deg(K_C+B_0)=-\frac{n}{m},\] hence
	$\frac{1}{m}\leq\frac{2}{n}$.\par
	\begin{labeling}{a}
	\item[\textbf{Case 1}:]	$Supp(B_1)\not\subset\{P''\}$. By Lemma \ref{b1} if we write $B_1=\sum_{P_i\in C}s_iP_i$, then
		$s_i\in\frac{1}{n_{P_i}w_{P_i}}\Z_{\geq0}$. We know that there exists $P_i\neq P''$ such that $s_i\neq0$.
		Since $w_{P_i}\leq 2$ if $P_i\neq P''$, $s_i\geq\frac{1}{24}$. Thus $\deg B_1\geq \frac{1}{24}$ and
		\[\deg A\geq\deg(K_C+B_0+B_1)\geq\frac{1}{24}-\frac{1}{m}\geq \frac{1}{24}-\frac{2}{n}.\]
	\item[\textbf{Case 2}:]	$Supp(B_1)=\{P''\}$. In this case $B=\frac{1}{2}P+\frac{1}{2}P'+s_{P''}P''$, so $\deg B<2$ and we have $\deg M>0$
		(because $\deg K_C+M+B>0$).
		\begin{labeling}{a}
		\item[\textbf{Case 2-1}:] $Supp(\{M+B_1\})=\{P''\}$. We have $Supp(\{M\})=P''$ and $\deg M\geq\frac{1}{12}$.
			so \[\deg A\geq\deg(K_C+B_0+B_1+M)\geq\frac{1}{12}-\frac{1}{m}\geq \frac{1}{12}-\frac{2}{n}.\]
		\item[\textbf{Case 2-2}:] $Supp(\{M+B_1\})\neq\{P''\}$.
			\begin{labeling}{a}
			\item[\textbf{Case 2-2-1}:] $\deg\rd{M+B_1}\geq0$. We have $\deg(M+B_1)\geq\deg(\{M+B_1\})\geq\frac{1}{12}$ and so
				\[\deg A\geq\deg(K_C+B_0+B_1+M)\geq\frac{1}{12}-\frac{1}{m}\geq \frac{1}{12}-\frac{2}{n}.\]
			\item[\textbf{Case 2-2-2}:] $\deg\rd{M+B_1}=-d<0$. We write $\{M+B_1\}=aP+a'P'+a''P''+\sum_ib_iP_i$ and
				we have $a$, $a'$ and $b_i$ are not all zero. The condition $h^3(\tl{X},\Oo_{\tl{X}})\neq0$ implies
				$\deg\rd{\tl{M}+\tl{B}_1+\tl{B}_0}\geq 2$ and so $\deg \rd{\tl{M}+\tl{B}_1}\geq 2-\#Supp(\tl{B}_0)$.
				Now we have \[\deg\rd{\tl{M}+\tl{B}_1}=
					\sum_i \rd{ae_{Q_i}}+\sum_j\rd{a'e_{Q'_j}}+\sum_k\rd{a''e_{Q''_k}}-nd\geq 2-\#Supp(\tl{B}_0),\]
				where $e_Q$ denotes the ramification index with respect to $\tl{C}\rightarrow C$. Hence
				\begin{align*}
				a+a'+a''&\geq\frac{1}{n}\left(\sum_i(\rd{ae_{Q_i}}+\{ae_{Q_i}\})+\sum_j(\rd{a'e_{Q'_j}}+\{a'e_{Q'_j}\})
					+\sum_k\rd{a''e_{Q''_k}}\right)\\
				&\geq \frac{1}{n}\left(\sum_i\{ae_{Q_i}\}+\sum_j\{ae_{Q'_j}\}\right)+
				\frac{1}{n}\left(\sum_i \rd{ae_{Q_i}}Q_i+\sum_j\rd{a'e_{Q'_j}}Q'_j+\sum_k\rd{a''e_{Q''_k}}\right)\\
				&\geq \frac{1}{n}\left(\sum_i\{ae_{Q_i}\}+\sum_j\{a'e_{Q'_j}\}\right)+d+\frac{2-\#Supp(\tl{B}_0)}{n}.
				\end{align*}
				\begin{labeling}{a}
				\item[\textbf{Case 2-2-2-1}:] $b_i\neq0$ for some $i$. In this case $\deg(M+B_1)\geq b_i+\frac{2-\#Supp(\tl{B}_0)}{n}$.
					Since $b_i\geq\frac{1}{12}$, we have
					\[\deg A\geq\deg(K_C+B_0+B_1+M)\geq\frac{1}{12}-\frac{1}{m}+\frac{2-\#Supp(\tl{B}_0)}{n}.\]
				\item[\textbf{Case 2-2-2-2}:] $b_i=0$ for all $i$. Then $a$ or $a'\neq0$. Observe the following fact:
					we have $e_{Q_i}$ and $e_{Q'_j}\leq 2$ and
					$e_{Q_i}=1$ if and only if $Q_i\in Supp(\tl{B}_0)$. Let $k=\#\se{Q_i}{e_{Q_i}=1}$ and $k'=\#\se{Q'_j}{e_{Q'_j}=1}$, then
					\[n=2(\#\{\nu^{-1}(P)\}-k)+k=2(\#\{\nu^{-1}(P')\}-k')+k'.\] Hence $k-k'$ is an even integer. Since
					$\#Supp(\tl{B}_0)\leq 3$, we have $k$ and $k'\leq 2$. Assume that $n\geq 6$, then $k\leq\frac{n}{3}$ and
					we have $n\leq 3(\#\{\nu^{-1}(P)\}-k)$. This implies
					\[\frac{1}{n}\sum_i\{ae_{Q_i}\}=\frac{\#\{\nu^{-1}(P)\}-k}{n}\{2a\}\geq\frac{1}{3}\{2a\}.\]
					The same argument yields $\frac{1}{n}\sum_j\{a'e_{Q'_j}\}\geq\frac{1}{3}\{2a'\}$.
					Now we have
					\[\deg(M+B_1)=a+a'+a''-d\geq\frac{1}{3}(\{2a\}+\{2a'\})+\frac{2-\#Supp(\tl{B}_0)}{n}.\]
					If one of $\{2a\}$ and $\{2a'\}$ is non-zero, then it is greater than or equal to $\frac{1}{12}$. Hence we have
					$\deg(M+B_1)\geq \frac{1}{36}+\frac{2-\#Supp(\tl{B}_0)}{n}$.
					Thus
					\[ \deg A\geq \deg(M+B_1)-\frac{1}{m}\geq \frac{1}{36}+\frac{2-\#Supp(\tl{B}_0)}{n}-\frac{1}{m}\geq\frac{1}{36}-\frac{2}{n}\]
					by noticing that $\#Supp(\tl{B}_0)>2$ implies $\frac{1}{m}\leq\frac{1}{n}$.\par
					Finally if both $\{2a\}$ and $\{2a'\}$ is zero, then $2M$ is integral outside $P''$. This implies either $10M$ or $12M$ is
					integral. Hence $\deg M\geq \frac{1}{12}$ and we have
					\[\deg A\geq\deg(K_C+B_0+M)\geq\frac{1}{12}-\frac{1}{m}\geq \frac{1}{12}-\frac{2}{n}.\]
				\end{labeling} 
			\end{labeling} 
		\end{labeling} 
	\end{labeling}
\end{proof}
\begin{pro}\label{nGor}
	Assume that $X$ is not Gorenstein, then $\deg A\geq \frac{1}{2928}$. Hence $|mK_X|$ defines the Iitaka fibration if
	$m\geq 5858$ and $|mK_F|$ is non-empty.\par
\end{pro}
\begin{proof}
	If $\tl{X}\rightarrow \tl{C}$ is a bielliptic fibration, then $\deg A\geq \frac{1}{864}$ by Lemma \ref{bie}.
	Now assume that $\tl{X}\rightarrow \tl{C}$ is an abelian fibration. If $\deg(K_X+B_0)\geq 0$, then $\deg A\geq\frac{1}{120}$ by
	Lemma \ref{b0big}. Assume that $\deg(K_X+B_0)<0$. If $\chi(\Oo_X)=1$ then the global Cartier index of $K_X$ is less than
	or equal to $12$ by Lemma \ref{chi1}, hence the degree of $\tl{X}\rightarrow X$ is less than or equal to $12$. This implies
	$\deg(\tl{C}/C)\leq 12$. Since $\deg\tl{A}\leq\frac{1}{156}$ and $\tl{A}=\nu\st A$, we have
	\[\deg A\leq\frac{1}{12\cdot156}=\frac{1}{1872}.\]
	From now on we assume that $\chi(\Oo_X)>1$, which implies $h^2(X,\Oo_X)>0$. Hence $h^2(\tl{X},\Oo_{\tl{X}})>0$.
	Recall that $\chi(\Oo_{\tl{X}})=0$ since $\tl{X}$ is Gorenstein. Lemma \ref{gorr} says that either
	$\deg \tl{M}=0$, $\tl{B}=\sum_{P\in C}(1-\frac{1}{u_P})P$ or $\deg\tl{A}\geq\frac{1}{24}$. 
	In the former case we have $\tl{B}=\tl{B}_0$. One has $\deg\tl{A}=\deg(K_{\tl{C}}+\tl{B}_0)$ and $\deg A=\deg(K_C+B_0)$.
	Thus $\deg A\geq\frac{1}{42}$. Now assume that $\deg\tl{A}\geq\frac{1}{24}$. Let $n=\deg(\tl{C}/C)$.
	We know that $B_0$ is of one of the following form:
	\begin{enumerate}
		\item $B_0=\frac{m-1}{m}P+\frac{m'-1}{m'}P'$.
		\item $B_0=\frac{1}{2}P+\frac{1}{2}P'+\frac{m-1}{m}P''$.
		\item $B_0=\frac{1}{2}P+\frac{2}{3}P'+\frac{m-1}{m}P''$, $3\leq m\leq 5$.
	\end{enumerate}
	If $B_0=\frac{1}{2}P+\frac{2}{3}P'+\frac{m-1}{m}P''$ for $3\leq m\leq 5$ then $\deg(K_C+B_0)=-\frac{1}{6}$, $-\frac{1}{12}$ or
	$-\frac{1}{30}$. Since $-2\leq\deg(K_{\tl{C}}+\tl{B}_0)=n\deg(K_C+B_0)$, we have $n\leq 60$. Since $\deg\tl{A}\geq\frac{1}{24}$,
	$\deg A\geq \frac{1}{1440}$.\par
	Now assume that $B_0=\frac{m-1}{m}P+\frac{m'-1}{m'}P'$. If $n\leq 122$ we have \[\deg A\geq\frac{1}{24\cdot122}=\frac{1}{2928}.\]
	Assume that $n\geq123$. Lemma \ref{ty1} implies \[\deg A\geq\frac{1}{60}-\frac{2}{n}\geq\frac{1}{60}-\frac{2}{123}=\frac{1}{2460}.\]
	Finally assume that $B_0=\frac{1}{2}P+\frac{1}{2}P'+\frac{m-1}{m}P''$. If $n\leq 73$ we have
	\[\deg A\geq\frac{1}{24\cdot73}=\frac{1}{1752}.\] If $n\geq 74$ then Lemma \ref{ty2} implies 
	\[\deg A\geq\frac{1}{36}-\frac{2}{n}\geq\frac{1}{36}-\frac{1}{37}=\frac{1}{1332}.\]
\end{proof}

\section{Boundedness of Iitaka fibration for Kodaira dimension one}\label{sk1}
\begin{proof}[Proof of Theorem \ref{mt}]
	If $C$ is not rational, this follows from Proposition \ref{nonr}.
	The K3 or Enriques fibration cases follow from Proposition \ref{k3} and the abelian or bielliptic fibration cases follow from
	Corollary \ref{gor} and Proposition \ref{nGor}.
\end{proof}
In the rest part of this section we will compute several examples.
\begin{eg}
	The first example is a trivial example. Let $F$ be a bielliptic curve such that $|6K_F|$ is non-empty but $|iK_F|$ is empty
	for all $i\leq5$ and let $C$ be a curve of general type. Then $X=F\times C$ is a smooth threefold of Kodaira dimension one
	such that $|6K_X|$ defines the Iitaka fibration but $|iK_X|$ is empty for all $i\leq5$.
\end{eg}
\begin{eg}
	Let $E$ be an elliptic curve. Pick two different points $P$ and $Q$ on $E$. One can find a line bundle $L$ such that $L^2=\Oo_E(P+Q)$.
	Let $C$ be the cyclic cover corresponds to $L^2=\Oo_E(P+Q)$. Then $C$ is a curve of genus two and $\phi\colon C\rightarrow E$ is a double cover
	ramified at $P$ and $Q$. Let $G=Aut(C/E)$, which is a cyclic group of order two. Let $F$ be an abelian surface and consider the $G$-action
	on $F$ via $-\mathbf{Id}$. Let $X=(F\times C)/G$.\par
	The singular points of $X$ are of the type $\frac{1}{2}(1,1,1)$, hence $X$ has terminal singularities. We want to show that $|4K_X|$ 
	defines the Iitaka fibration, and $|iK_X|$ does not define the Iitaka fibration for $i\leq3$. One has
	\[H^0(X,mK_X)=H^0(F\times C,mK_F\boxtimes mK_C)^G=H^0(C,mK_C)^G\]
	since the unique section in $H^0(F,mK_F)$ is fixed by $G$ for all $m$. To compute $H^0(C,mK_C)^G$, note that
	$\phi\ts\Oo_C=\Oo_E\oplus L^{-1}$ and $\Oo_C(2K_C)=\phi\st\Oo_E(2K_E+P+Q)=\phi\st L^2$, hence
	$\phi\ts\Oo_C(2kK_C)= L^{2k}\oplus L^{2k-1}$ by the projection formula.
	The $G$-invariant part of $H^0(C,2kK_C)$ is $H^0(E,L^{2k})$ and $L^{2k}$ is very ample if and only if $k\geq2$.
	Hence $|2K_X|$ does not define the Iitaka fibration, but $|4K_X|$ does.\par
	On the other hand, by Grothendieck duality we have 
	\begin{align*}
		\phi\ts \Oo_C((2k+1)K_C)&=\phi\ts Hom_{\Oo_C}(\Oo_C(-2kK_C),\Oo_C(K_C))\\
		&\cong Hom_{\Oo_E}(\phi\ts\Oo_C((-2kK_C)),\Oo_E(K_E))\cong L^{2k}\oplus L^{2k+1}.
	\end{align*}		
	This shows that $h^0(C,3K_C)^G=h^0(E,L^2)=2$ and hence $|3K_X|$ do not define the Iitaka fibration.\par
	We remark that this is the worst example we know for abelian fibrations. 
\end{eg}
\begin{eg}\label{e42}
	Let $C$ be the Klein quartic \[(x^3y+y^3z+z^3x=0)\subset\Pp^2.\] It is known that \[|G|=|Aut(C)|=168=42(2g(C)-2)\]
	(c.f. \cite[Section 6.5.3]{d}). Let \[F=(x^3y+y^3z+z^3x+u^4=0)\subset\Pp^3,\] which is a K3 surface. Define the $G$-action on $F$ by
	$g([x\colon y\colon z\colon u])=[g([x\colon y\colon z])\colon u]$. Let $X=(F\times C)/G$. We will prove the following:
	\begin{enumerate}[(1)]
	\item $X$ has terminal singularities.
	\item $H^0(X,iK_X)\leq1$ for $i\leq41$ and $H^0(X,42K_X)=2$.
	\end{enumerate}
	Hence the smooth model of $X$ is a threefold of Kodaira dimension one, such that $|42K_X|$ defines the Iitaka fibration, but
	$|iK_X|$ do not define the Iitaka fibration for $i\leq 41$.\par
	First we prove (1). Since $|Aut(C)|=168=42(2g(C)-2)$, it is well-known that the morphism $C\rightarrow C/G$ ramified at three points
	$P_2$, $P_3$ and $P_7\in C/G$ and the
	stabilizer of points over $P_r$ is a cyclic group of order $r$ for $r=2$, $3$ and $7$ (cf. also \cite[Proposition in Section 2.1]{e}).
	Let $F_r\subset X$ be the fiber over $P_r$. We need to compute the singularities of $F_r$.\par
	\begin{enumerate}[(i)]
	\item $r=7$. Note that any order $7$ subgroup of $G$ is a Sylow-subgroup, which is unique up to conjugation. To compute the singularities
		we may assume that the stabilizer is the cyclic group generated by the element
		(see \cite[Section 1.1]{e} for the description of elements in $G$)
		\[\sigma=\begin{pmatrix}\xi^4 & 0 & 0 \\ 0 & \xi^2 & 0 \\ 0 & 0 & \xi \end{pmatrix},\quad \xi=e^{\frac{2\pi i}{7}}.\]
		One can compute that $H_7=\langle \sigma\rangle$ has three fixed point $[1\colon 0\colon 0\colon 0]$, $[0\colon 1\colon 0\colon 0]$
		and $[0\colon 0\colon 1\colon 0]$ on $F_7$. The $H_7$ action
		around those fixed points is of the form $\frac{1}{7}(4,3)$, $\frac{1}{7}(2,5)$ and $\frac{1}{7}(1,6)$ respectively.
		The conclusion is that $X$ has three singular points over $P_7$ which are cyclic quotient points of the form
		$\frac{1}{7}(1,6,1)$, $\frac{1}{7}(2,5,1)$ and $\frac{1}{7}(3,4,1)$ respectively.
	\item $r=3$. As before any order $3$ subgroup of $G$ is a Sylow-subgroup and hence we may assume that the stabilizer is generated by
		\[\tau=\begin{pmatrix}0&1&0\\0&0&1\\1&0&0\end{pmatrix}.\]
		There are six fixed points on $F_3$, namely $[1\colon \omega\colon \omega^2\colon 0]$, $[1\colon \omega^2\colon \omega\colon 0]$ and
		$[1\colon 1\colon 1\colon u_i]_{i=1,...,4}$, where $w=e^{\frac{2\pi i}{3}}$ and $u_1$, ..., $u_4$ are four roots of the equation
		$u^4+3=0$. Let $P=[1\colon \omega\colon \omega^2\colon 0]$. The local coordinates near $P\in\A^3$ are
		$y'=y/x-\omega$, $z'=z/x-\omega^2$ and $u'=u/x$. Let $\alpha=y'+z'$ and $\beta=\omega y'+\omega^2 z'$.
		Then $\alpha$, $\beta$ and $u'$ are also local coordinates near $P$ and the defining equation of $F_3$ near $P$ can be written as
		\[ (1+3\omega)\alpha+\mbox{ higher order terms},\] hence the local coordinate of $P\in F_3$ is $\beta$ and $u'$.\par
		We have \[\tau(u')=\tau(\frac{u}{x})=\frac{u}{z}=\frac{1}{z'+\omega^2}u'=\omega\phi u',\]
		where $\phi$ is a holomorphic function satisfying $\phi(P)=1$ and $\phi\tau(\phi)\tau^2(\phi)=1$.
		Let $\lambda$ be a holomorphic function near $P$ such that $\lambda^3=\phi$ and $\lambda(P)=1$. One can check that
		\[\tau(\lambda' u')=\omega \lambda' u',\quad \mbox{where }\lambda'=\lambda^2\tau(\lambda).\]\par
		On the other hand, 
		\[ \tau(\beta)=\frac{\omega}{z'+\omega^2}\beta=\omega^2\phi\beta\] and we also have
		\[\tau(\lambda'\beta)=\omega^2\lambda'\beta.\] The conclusion is that the singularity of $P\in F_3$ is of the form $\frac{1}{3}(1,2)$,
		and hence the singularity of $P\in X$ is a terminal cyclic quotient $\frac{1}{3}(1,2,1)$.
		A similar computation (simply interchange $\omega$ and $\omega^2$ in the calculation) shows that
		$P'=[1\colon \omega^2\colon \omega\colon 0]\in F_3\subset X$
		is also a terminal cyclic quotient point.\par
		Finally we compute the singularities of $Q_i=[1\colon 1\colon 1\colon u_i]$ for $i=1$, ..., $4$. Let $y_0=y/x-1$ and $z_0=z/x-1$.
		We take $\alpha_0=\omega y_0+\omega^2 z_0$ and $\beta_0=\omega^2 y_0+\omega z_0$ as local coordinates of $Q_i\in F_3$. One can see that
		\[ \tau(\alpha_0)=\tau(\omega y_0+\omega^2 z_0)=\tau(\omega \frac{y}{x}+\omega^2 \frac{z}{x}+1)
			=\frac{x}{z}(\omega+\omega^2 \frac{y}{x}+\frac{z}{x})=\frac{\omega}{z_0+1}\alpha_0\]
		and \[\tau(\beta_0)=\frac{\omega^2}{z_0+1} \beta_0.\]
		Using the same technique above we can say that the singularity of $Q_i\in F_3$ is of the form $\frac{1}{3}(1,2)$,
		hence $Q_i\in X$ is also a terminal cyclic quotient point for $i=1$, ..., $4$.
	\item $r=2$. Let $\mu\in G$ be an order two element. We have to compute the singularities of $F_2/\langle \mu\rangle$. By
		\cite[Proposition in Section 2.1]{e}, $\mu$ fixes a line and a point in $\Pp^2$. By the character table of $G$
		(cf. \cite[Section 1.1]{e}), we know that the three-dimensional character of $\mu$ is equal to $-1$.
		This implies the fixed line of $\mu$ in $\Pp^2$ corresponds to the two-dimensional eigenspace with eigenvalue $-1$,
		and the fixed point of $\mu$ in $\Pp^2$ corresponds to the one-dimensional eigenspace with
		eigenvalue $1$. Assume that $L$ is the fixed line of $\mu$ in $\Pp^2$, $L\cap C=[x_i\colon y_i\colon z_i]_{i=1,...,4}$
		and the fixed point of $\mu$ in $\Pp^2$ is $[x_5\colon y_5\colon z_5]$. One can check that the fixed point of $\mu$ on $F_2$ is
		$[x_i\colon y_i\colon z_i\colon 0]$ for $i=1$, ..., $4$ and $[x_5\colon y_5\colon z_5\colon u_j]_{j=1,...,4}$,
		where $u_j$ are the roots of the equation $u^4+x_5^3y_5+y_5^3z_5+z_5^3x_5=0$. The conclusion is that there are eight
		cyclic quotient points of index two on $F_2$. Since they are isolated, all the singular points should be the from $\frac{1}{2}(1,1)$.
		The conclusion is that there are eight singular points on $X$ which is of the from $\frac{1}{2}(1,1,1)$.
	\end{enumerate}\par
	Note that the Iitaka fibration of $X$ is a K3 fibration, and the basket data of $X$ is
	\[ \{(2,1)\times8,(3,1)\times6,(7,1),(7,2),(7,3)\}.\] It is the worst case in Section \ref{sk3}.\par
	Now we prove (2). We need to compute $H^0(X,mK_X)=H^0(F\times C,mK_F\boxtimes mK_C)^G$. Consider the long exact sequence
	\begin{align*}
		0\rightarrow &H^0(\Pp^3,mK_{\Pp^3}+(m-1)F)\rightarrow H^0(\Pp^3,m(K_{\Pp^3}+F))\rightarrow H^0(F,mK_F)\rightarrow \\
		&H^1(\Pp^3,mK_{\Pp^3}+(m-1)F)\rightarrow\cdots.
	\end{align*}	 
	Since $H^i(\Pp^3,mK_{\Pp^3}+(m-1)F)=0$ for $i=0$, $1$, we have \[H^0(F,mK_F)=H^0(\Pp^3,m(K_{\Pp^3}+F))=H^0(\Pp^3,\Oo_{\Pp^3}).\]
	Thus any section in $H^0(F,mK_F)$ is $G$-invariant. This tell us that
	\[H^0(X,mK_X)=H^0(F\times C,mK_F\boxtimes mK_C)^G=H^0(C,mK_C)^G.\]\par
	One can consider the following long exact sequence
	\begin{align*}
		0\rightarrow& H^0(\Pp^2,mK_{\Pp^2}+(m-1)C)\rightarrow H^0(\Pp^2,m(K_{\Pp^2}+C))\rightarrow H^0(C,mK_C)\rightarrow \\
		&H^1(\Pp^2,mK_{\Pp^2}+(m-1)C)\rightarrow\cdots.
	\end{align*}
	Since $H^1(\Pp^2,mK_{\Pp^2}+(m-1)C)=0$, the restriction map
	\[ H^0(\Pp^2,m(K_{\Pp^2}+C))=H^0(\Pp^2,\Oo_{\Pp^2}(m))\rightarrow H^0(C,mK_C)\] is surjective.
	Thus to find $G$-invariant sections in $H^0(C,mK_C)$ is equivalent to find $G$-invariant polynomials of degree $m$ on $C$.
	It is known that (c.f. \cite[Section 1.2]{e}) the $G$-invariant polynomials are generated by three elements $f_6$, $f_{14}$ and $f_{21}$,
	where $f_d$ is a polynomial of degree $d$, satisfying $f_{21}^2=f_{14}^3-1728f_6^7$. Hence $h^0(C,iK_C)^G\leq1$ for all $i\leq41$
	and $H^0(C,42K_C)^G$ is spanned by $f_6^7$ and $f_{14}^3$. Thus $h^0(X,iK_X)\leq1$ for $i\leq41$ and $h^0(X,42K_X)=2$. 
	
\end{eg}

\small{
Department of Mathematics, National Taiwan University, No. 1, Sec. 4, Roosevelt Rd., Taipei 10617, Taiwan\\
\it{Email address}: d02221002@ntu.edu.tw}
\appendix
\section{The algorithm computing basket data}
We give the algorithm we used in the proof of Proposition \ref{k3}, about finding possible basket data for a K3 or an Enriques fibration.\par
Recall for our notation: $X$ is a minimal terminal threefold of Kodaira dimension one, $f\colon X\rightarrow C$ is the Iitaka fibration and
a general fiber $F$ of the fibration is either a K3 or an Enriques surface.
\begin{description}
\item[Input data]$ $
	\begin{enumerate}[(i)]
		\item $\chi(\Oo_F)$. This number is either $2$ (when $F$ is a K3 surface) or $1$ (if $F$ is an Enriques surface).		
		\item $\chi(\Oo_X)$. When $F$ is an Enriques surface, $\chi(\Oo_X)=1$. If $F$ is a K3 surface, we may assume that
			$0\leq\chi(\Oo_X)\leq 2$ by the argument in the first page of Section \ref{sk3}.
		\item An positive integer $N$.
	\end{enumerate} 
\item[Output data] A set $S_{\chi(\Oo_F),\chi(\Oo_X),N}$ of the basket data $\{(r_i,b_i)\}_{i\in I}$ such that $2b_i\leq r_i$ and
	$gcd(r_i,b_i)=1$ for all $i\in I$.
\item[Conditions] For all $\{(r_i,b_i)\}_{i\in I}\in S_{\chi(\Oo_F),\chi(\Oo_X),N}$, we have
	\begin{enumerate}[(1)]
		\item \[24\chi(\Oo_X)<\sum_{i\in I}\left(r_i-\frac{1}{r_i}\right)<24\chi(\Oo_X)+\frac{12}{N}\chi(\Oo_F).\]
		\item For all $m=2$, ..., $lcm\{r_i\}_{i\in I}$, we have
			\[(1-2m)\chi(\Oo_X)+\sum_{i\in I}\sum_{j=1}^{m-1}\frac{\overline{jb_i}(r_i-\overline{jb_i})}{2r_i}\]
			is a non-negative integer.
	\end{enumerate}
\end{description}\par
If we write $K_X=f\st A$ for some ample $\Q$-divisor $A$ on $C$ and assume that $\deg A<\frac{1}{N}$, then the basket data of $X$ must belongs
to $S_{\chi(\Oo_F),\chi(\Oo_X),N}$. One can choose a sufficiently large $N$, so that $S_{\chi(\Oo_F),\chi(\Oo_X),N}$ do not contain
too many elements. Then one can compute $\deg A$ using Lemma \ref{chi} case by case.
\begin{description}
\item[Result]$ $
	\begin{enumerate}[(1)]
	\item $F$ is a K3 surface:
		\begin{enumerate}[({1}-1)]
		\item $\chi(\Oo_X)=0$: We take $N=2$.
			\[ S_{2,0,2}=\left\lbrace
			\begin{tabular}{l}
			$\se{(2,1)\times 4}{\deg A=\frac{1}{4}}$\\
			$\se{(2,1),(3,1),(6,1)}{\deg A=\frac{5}{12}}$\\
			$\se{(2,1),(4,1)\times 2}{\deg A=\frac{3}{8}}$\\
			$\se{(3,1)\times 3}{\deg A=\frac{1}{3}}$\\
			$\se{(5,1),(5,2)}{\deg A=\frac{2}{5}}$
			\end{tabular} \right\rbrace.\]
		\item $\chi(\Oo_X)=1$: We take $N=12$.
			\[ S_{2,1,12}=\left\lbrace
			\begin{tabular}{l}
				$\se{(2,1)\times 6,(3,1),(4,1)\times 2,(6,1)}{\deg A=\frac{1}{24}}$\\
				$\se{(2,1)\times 5,(3,1),(5,1),(5,2),(6,1)}{\deg A=\frac{1}{15}}$\\
				$\se{(2,1)\times 5,(4,1)\times 2,(5,1),(5,2)}{\deg A=\frac{1}{40}}$\\
				$\se{(2,1)\times 4,(5,1)\times 2,(5,2)\times 2}{\deg A=\frac{1}{20}}$\\
				$\se{(2,1),(3,1)\times 6,(4,1)\times 2}{\deg A=\frac{1}{24}}$\\
				$\se{(3,1)\times 6,(5,1),(5,2)}{\deg A=\frac{1}{15}}$\\
				$\se{(13,4),(13,6)}{\deg A=\frac{1}{13}}$
			\end{tabular} \right\rbrace.\]
		\item $\chi(\Oo_X)=2$: We take $N=24$.
			\[ S_{2,2,24}=\left\lbrace 
				\begin{tabular}{l}
				$\se{(2,1)\times 8, (3,1)\times 6, (7,1), (7,2), (7,3)}{\deg A=\frac{1}{42}}$\\
			 	$\se{(2,1)\times 8, (3,1), (5,1), (5,2)\times 3, (15,4)}{\deg A=\frac{1}{30}}$ \\
			 	$\se{(2,1)\times 5, (4,1)\times 2, (17,6), (17,7)}{\deg A=\frac{5}{136}}$ \\
			 	$\se{(2,1)\times 5, (7,2)\times 2, (7,3)\times 2, (14,5)}{\deg A=\frac{1}{28}}$ \\
			 	$\se{(2,1)\times 4, (3,1)\times 2, (5,1), (5,2), (7,3), (21,8)}{\deg A=\frac{13}{420}}$ \\
			 	$\se{(2,1)\times 4, (3,1)\times 2, (5,1), (5,2), (8,3)\times 2, (12,5)}{\deg A=\frac{1}{40}}$ \\
			 	$\se{(2,1)\times 3, (3,1)\times 3, (5,1), (5,2), (9,4), (18,7)}{\deg A=\frac{7}{180}}$
				\end{tabular} \right\rbrace.\]
		\end{enumerate}
	\item $F$ is an Enriques surface: We take $N=6$. Note that the set $S_{1,1,6}$ is exactly $S_{2,1,12}$, but the corresponding 
		$\deg A$ are different.
		\[ S_{1,1,6}=\left\lbrace
			\begin{tabular}{l}
			$\se{(2,1)\times 6,(3,1),(4,1)\times 2,(6,1)}{\deg A=\frac{1}{12}}$\\
			$\se{(2,1)\times 5,(3,1),(5,1),(5,2),(6,1)}{\deg A=\frac{2}{15}}$\\
			$\se{(2,1)\times 5,(4,1)\times 2,(5,1),(5,2)}{\deg A=\frac{1}{20}}$\\
			$\se{(2,1)\times 4,(5,1)\times 2,(5,2)\times 2}{\deg A=\frac{1}{10}}$\\
			$\se{(2,1),(3,1)\times 6,(4,1)\times 2}{\deg A=\frac{1}{12}}$\\
			$\se{(3,1)\times 6,(5,1),(5,2)}{\deg A=\frac{2}{15}}$\\
			$\se{(13,4),(13,6)}{\deg A=\frac{2}{13}}$
			\end{tabular} \right\rbrace.\]
	\end{enumerate}
	The basket data with minimal degree is \[\{(2,1)\times 8, (3,1)\times 6, (7,1), (7,2), (7,3)\}\in S_{2,2,24}\]
	with $\deg A=\frac{1}{42}$. This basket data appears in Example \ref{e42}.
\item[Algorithm]$ $
	\begin{algorithm}
	\SetKwFunction{fr}{FindR}
	\SetKwFunction{fb}{FindB}
	\KwData{$\chi(\Oo_F)$, $\chi(\Oo_X)$ and $N$}
	\KwResult{A set $S_{\chi(\Oo_F),\chi(\Oo_X),N}$ consists of $\{(r_i,b_i)\}$ satisfying the condition (1) (2) above.}
	Let $T$ be an empty set\;
	\For{$r=2$ \KwTo $\rd{24\chi(\Oo_X)+\frac{12}{N}\chi(\Oo_F)}$}
		{ Execute \fr($\{r\}$) } \tcc*[l]{Find all possible $\{r_i\}$ satisfying the Condition (1); Possible solutions will be stored in $T$} 
	\ForEach{$R=\{r_i\}\in T$}
	{
		Execute \fb{R}, Let $B_R$ be the outcome of $\fb{R}$\tcc*[l]{$B_R$ consists of the set $\{b_i\}$ such that $(r_i,b_i)$ satisfies
			condition (2)}
	}
	Output $S_{\chi(\Oo_F),\chi(\Oo_X),N}=\se{(r_i,b_i)}{R=\{r_i\}\in T; \{b_i\}\in B_R}$\;
	\caption{Main process}
	\end{algorithm}	
	\begin{function}
	\SetKwInOut{ind}{Input Data}
	\SetKwInOut{goa}{Goal}
	\SetKwProg{Fn}{Function}{}{}
	\Fn{\fr{$R$}}
	{
		\ind{$R$ is a set of nature numbers}
		\goa{Find all possible set $\{r_i\}$ satisfying the Condition (1), with $R\subsetneq\{r_i\}$}
		Compute $r=\sum_{r_i\in R}\left(r_i-\frac{1}{r_i}\right)$\;
		Let $k=\max\se{r_i}{r_i\in R}$\;
		\While{$(r+k-\frac{1}{k})<24\chi(\Oo_X)+\frac{12}{N}\chi(\Oo_F)$}
		{
			\If{$(r+k-\frac{1}{k})>24\chi(\Oo_X)$}
			{
				Put the element $R\cup\{k\}$ in $T$\;	
			}
			Execute \fr($R\cup\{k\}$)\;
			Replace $k$ by $k+1$\;
			
		}
	}
	\caption{FindR()}
	\end{function}
	\begin{function}
	\SetKwProg{Fn}{Function}{}{}
	\SetKwInOut{ind}{Input Data}
	\SetKwInOut{goa}{Goal}
	\SetKwFunction{fn}{Function1}
	\Fn{\fb{$R$}}
	{
		\ind{$R=\{r_i\}$ is a set of nature numbers}
		\goa{Find all possible $\{b_i\}$ such that $\{(r_i,b_i)\}$ satisfies Condition (2)}
		Let $E_i=\se{a\in\N}{2a\leq r_i,gcd(a,r_i)=1}$\;
		Let $B=E_1$\;
		Execute \fn{$B$,$2$}\tcc*[l]{\fn find every pair $\{(r_i,b_i)\}$ such that $2b_i\leq r_i$ and $gcd(r_i,b_i)=1$ for all $i$,
			and store such data in $B$}
		\ForEach{$\{b_i\}\in B$}
		{
			\For{$m=2$ \KwTo $lcm\{r_i\}$}
			{			
				\If{$(r_i,b_i)$ do not satisfy Condition (2)}{Remove $\{b_i\}$ from $B$}
			}	
		}
		Output $B$;
	}
	\Fn{\fn{$B$,$j$}}
	{
		\ind{$B$ contains the data $\{b_i\}_{i=1}^{j-1}$ satisfying $2b_i\leq r_i$ and $gcd(r_i,b_i)=1$ for all $i\leq j-1$}
		\goa{Replace $B$ by $B'=\se{\{b_i\}}{2b_i\leq r_i\mbox{ and }gcd(r_i,b_i)=1\mbox{ for all }i}$}
		Let $B'$ be an empty set\;
		\ForEach{$\{b_i\}_{i=1}^{j-1}\in B$}
		{
			\ForEach{$e\in E_j$}
			{
				\If{$r_j\neq r_{j-1}$ {\bf or} ($r_j=r_{j-1}$ {\bf and} $e\geq b_{j-1}$)}
				{
					Put $\{b_i\}\cup \{e\}$ in $B'$\;
				}	
			}
		}
		Replace $B$ by $B'$\;
		\If{$j<|R|$}{ Execute \fn($B$,$j+1$)\;}
	}
	\caption{FindB()}
	\end{function}

\end{description}

\end{document}